\newcommand{\Ff}{\mathcal F}
 \newcommand{\RR}{\mathbf{R}}  % reals
 \newcommand{\BB}{\mathbf{B}}  %ball
    \newcommand{\dist}{\operatorname{dist}}
 \newcommand{\eps}{\epsilon}
 \newcommand{\Tan}{\operatorname{Tan}}
\newcommand{\ee}{\mathbf e}
\newcommand{\reach}{\operatorname{reach}}
\newcommand{\spt}{\operatorname{spt}}
\newcommand{\Hh}{\mathcal{H}}
\def\begfig {
\begin{figure}
\small }
\def\endfig {
\normalsize
\end{figure}
}
    \newtheorem{theorem}    {Theorem}   %    [section]
    \newtheorem{lemma}      [theorem]       {Lemma}
    \newtheorem{corollary}  [theorem]     {Corollary}
    \newtheorem*{theorem*}{Theorem}
    \theoremstyle{definition}
    \newtheorem{definition}  [theorem] {Definition}
    \theoremstyle{definition}
    \newtheorem{remark}   [theorem]       {Remark}
\renewcommand{\thesubsection}{\thetheorem}
\begin{document}

\renewcommand{\thesubsection}{\thetheorem}
%\renewcommand{\thesubsection}{\thetheorem}{\thedefinition}
   % this is so subsections and theorems, etc will be
   % numbered together.
\title{On the fundamental regularity theorem for mass-minimizing flat chains}

\author{Brian White}
%\thanks{The author was partially supported by NSF grant DMS-1711293}
\address{Brian White\newline
Department of Mathematics \newline
 Stanford University \newline 
  Stanford, CA 94305, USA\newline
{\sl E-mail address:} {\bf bcwhite@stanford.edu}
}

%\email{bcwhite@stanford.edu}
\begin{abstract}
In the theory of flat chains with coefficients in a normed abelian group,
we give a simple necessary and sufficient condition on a group element $g$
in order for the following fundamental regularity principle to hold:
if a mass-minimizing chain is, in a ball disjoint from the boundary, sufficiently weakly
close to a multiplicity $g$ disk, then, in a smaller ball, it is a  
$C^{1,\alpha}$ perturbation with multiplicity $g$
of that disk.
\end{abstract}

\subjclass[2020]{Primary 49Q15}
\keywords{mass-minimizing, flat chain}
\date{7 August, 2024}
%\subjclass[2010]{53A10 (primary), and 49Q05, 53C42 (secondary)} 

\newcommand{\cone}{\operatorname{cone}}
\newcommand{\diag}{\operatorname{diag}}

\newcommand{\Aac}{\mathcal{A}_{\rm con}}

\newcommand{\Aar}{\mathcal{A}_{\rm rect}}

\newcommand{\pdf}[2]{\frac{\partial #1}{\partial #2}}
\newcommand{\pdt}[1]{\frac{\partial #1}{\partial t}}

\newcommand{\entropy}{\operatorname{entropy}}
\newcommand{\mcd}{\operatorname{mcd}}
\newcommand{\mdr}{\operatorname{mdr}}
\newcommand{\tM}{\tilde M}
\newcommand{\tF}{\tilde F}
\newcommand{\tmu}{\tilde \mu}

\newcommand{\Gin}{\Gamma_\textnormal{in}}

\newcommand{\Tfat}{\operatorname{T_\textnormal{fat}}}
\newcommand{\tTfat}{\operatorname{\tilde T_\textnormal{fat}}}

\maketitle

 % \tableofcontents
 
\section{Introduction}
 
The fundamental regularity theorem for mass-minimizing integral flat chains can be stated
in various ways, one of which is the following:

\begin{theorem}\label{fundamental-1}
Let $0<\alpha<1$.
Suppose $M$ is a a mass-minimizing integral $m$-chain in $\RR^d$ such that
$M$ has no boundary in the open unit ball $B(0,1)$.  Suppose also that $M\llcorner B(0,1)$
is weakly sufficiently close to a multiplicity-$1$ disk through the origin.
Then $M\llcorner B(0,1/2)$ is an $m$-manifold of multiplicity $1$.
Furthermore, the $m$-manifold is the graph of
a function $f$ over a domain in the given disk with $\|f\|_{C^{2,\alpha}}\le 1$.
\end{theorem}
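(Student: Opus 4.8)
The plan is to run the classical De Giorgi--Almgren $\epsilon$-regularity scheme, exploiting that here the model object is a \emph{single} multiplicity-one disk, so that scalar (rather than $Q$-valued) harmonic approximation will suffice. After a translation and rotation we may assume the disk is $P=\RR^m\times\{0\}$ and $0\in\spt M$. \emph{Step 1 (weak closeness $\Rightarrow$ small excess).} Since $M$ is mass-minimizing with $\D M = 0$ in $B(0,1)$, the monotonicity formula makes $r\mapsto \|M\|(B(x,r))/(\omega_m r^m)$ nondecreasing for every $x\in\spt M$. A comparison argument --- cut $M$ along a sphere $\D B(0,\rho)$ on which the flat distance to the disk is small, and cap it off with a chain of small mass lying near that sphere --- bounds $\|M\|(B(0,\rho))$ above by $\omega_m\rho^m + o(1)$ as the flat distance tends to $0$, while lower semicontinuity of mass supplies the matching lower bound. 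Hence, if $M$ is weakly close enough to the disk, the mass ratio is within a prescribed $\epsilon_0$ of $1$ for every $x\in\spt M\cap B(0,3/4)$ and every $r\le 1/4$; in particular the excess
\[
 E(x,r)\;=\;r^{-m}\int_{B(x,r)} \dist(T_y,P)^2 \,d\|M\|(y),
\]
which measures the $L^2$-deviation of the approximate tangent planes $T_y$ from $P$, is as small as we wish on that whole family of balls.

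\emph{Step 2 (Lipschitz approximation).} Small excess together with minimality gives, on each such ball, a Lipschitz map $u\colon A\subset P\to P^\perp$ of small Lipschitz constant such that $\graph u$ agrees with $M$ outside a ``bad set'' of measure $\lesssim E(x,r)r^m$, with $\int_A |Du|^2 \lesssim E(x,r)r^m$; this is Federer's (equivalently, the single-valued case of Almgren's) strong Lipschitz approximation for currents of small excess. \emph{Step 3 (harmonic approximation and excess decay).} Because $M$ minimizes mass and $\area(\graph u)=\int(1+\tfrac12|Du|^2+O(|Du|^4))$, the minimality comparison $\|M\|(\cdot)\le \area(\graph h)+(\text{bad-set error})$ against the harmonic extension $h$ of $u|_{\D}$ on a slightly smaller ball yields a Caccioppoli-type inequality $\int |D(u-h)|^2 \lesssim o(1)\,E(x,r)r^m$. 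Since $h$ is harmonic its tilt excess decays like $\theta^2$ under rescaling by $\theta$, and feeding the small quantity $u-h$ back in gives, for a fixed $\theta\in(0,\tfrac12)$ and once $E(x,r)$ is below a threshold (which Step 1 guarantees), a decay estimate of the shape $E(x,\theta r)\le \tfrac12\,\theta^{2\alpha} E(x,r)$. I expect this to be the main obstacle: controlling every error term --- the bad-set contribution, the $O(|Du|^4)$ area error, and the drift of the optimal reference plane --- so that each is genuinely lower order. It is also exactly the step where a multiplicity-one, single-sheeted model is essential: with a multiplicity-$Q$ cone one would be forced into Almgren's $Q$-valued machinery instead.

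\emph{Step 4 ($C^{1,\alpha}$ graphicality).} Iterating the decay estimate over dyadic scales gives $E(x,r)\le C r^{2\alpha}$ for all $x\in\spt M\cap B(0,1/2)$ and $r\le 1/4$, and simultaneously controls how fast the optimal reference plane at $x$ tilts as $r\downarrow 0$ (its normal moves by $O(r^\alpha)$). A Campanato-type argument then identifies $\spt M\cap B(0,1/2)$ as a $C^{1,\alpha}$ $m$-submanifold which is graphical over $P$; since $\D M = 0$, the constancy theorem pins the multiplicity of $M$ on this graph to $1$, so $M\llcorner B(0,1/2)=\graph f$ with $f\in C^{1,\alpha}$ over a domain in $P$.

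\emph{Step 5 (bootstrap and normalization).} The graph function $f$ now solves the minimal surface system, a quasilinear elliptic system whose coefficients depend on $Df$; since $f\in C^{1,\alpha}$ the coefficients are $C^{0,\alpha}$, and Schauder estimates upgrade $f$ to $C^{2,\alpha}$ (in fact $C^{\infty}$) on a slightly smaller ball, with $\|f\|_{C^{2,\alpha}}$ bounded by a constant multiple of the initial weak distance of $M\llcorner B(0,1)$ to the disk. Strengthening the hypothesis ``sufficiently weakly close'' makes that constant multiple $\le 1$, and a harmless relabeling of radii restores the ball $B(0,1/2)$, completing the argument.
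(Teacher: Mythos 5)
Your outline is a correct roadmap, but it reconstructs the classical De Giorgi--Almgren excess-decay proof rather than following the route this paper actually takes. The paper obtains Theorem~\ref{fundamental-1} as the special case $G=\ZZ$, $g=1$, $\lambda=0$ of Theorem~\ref{fundamental-2}, and the proof there is a \emph{reduction to the Allard Regularity Theorem}, quoted as a black box: all one must verify is that, on a slightly smaller ball, the density ratios of $M$ are trapped near $|g|$. The upper density bound comes from essentially your Step 1 (cut-and-cap comparison plus lower semicontinuity); the lower bound $\Theta(M,p)\ge|g|$ for $p\in\spt M$ is where the paper works, via the blow-up/dichotomy argument of Lemma~\ref{bernstein-lemma}, Corollary~\ref{bernstein-corollary}, and the projection Lemma~\ref{projection-lemma} --- and in the integer, multiplicity-one case that lower bound is automatic from integrality and monotonicity, so the paper's proof of Theorem~\ref{fundamental-1} is very short. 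Allard then delivers the $C^{1,\alpha}$ multiplicity-one graph, and the Schauder bootstrap to $C^{2,\alpha}$ (your Step 5) is the same final step on either route. What your route buys is self-containedness and the fact that minimality itself (not merely bounded first variation) drives the decay, via the comparison with the harmonic graph in Step 3; what the paper's route buys is brevity and modularity --- all the tilt-excess analysis is outsourced to Allard, leaving only a density estimate, which is precisely the ingredient that must change when the coefficient group changes. Two elisions in your sketch deserve attention, though neither is fatal: in Step 1, passing from the global bound $|M\llcorner B(0,\rho)|\le\omega_m\rho^m+o(1)$ to a density-ratio bound at \emph{every} $x\in\spt M\cap B(0,3/4)$ and every $r\le 1/4$ requires the intermediate fact that $\pi_{\#}\bigl(M\llcorner B(0,\rho)\bigr)$ is the multiplicity-one disk (constancy theorem plus flat closeness), so that the cylindrical excess over each small sub-disk is dominated by the global excess, which then feeds monotonicity; and in Step 5 the $C^{2,\alpha}$ norm is not literally a constant multiple of the flat distance --- it merely tends to $0$ with it --- but that suffices for the stated conclusion.
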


More generally, the theorem is also true for ``almost minimizing'' chains.
In particular, it applies to chains that are homologically mass minimizing in smooth Riemannian
manifolds, or, more generally, in sets of positive reach.  See~\S\ref{almost-section}.

If $G$ is any normed abelian group, then the theory of integral flat 
chains generalizes to flat chains with coefficients in $G$.
In this paper, we address the question: for which coefficient groups $G$ and for which
multiplicities $g\in G$ does Theorem~\ref{fundamental-1} hold?

It is natural to require that $G$ be complete with respect to its norm $|\cdot|$, i.e., that
$G$ be a complete metric space with respect to the metric $d(x,y)=|x-y|$.
(Otherwise, replace $G$ by its metric space completion.)
It is also natural to require that
\begin{equation}\label{compact}
  \text{If $R<\infty$, then $\{x\in G: |x|\le R\}$ is compact.}
\end{equation}
Condition~\eqref{compact}
  is the necessary and sufficient condition for the fundamental compactness
theorem for flat chains to hold.  (This follows easily from the 
Deformation Theorem~\cite{fleming}*{Theorem~7.3} or~\cite{white-deformation}.)

For such normed abelian coefficient groups $G$, we prove

\begin{theorem}\label{main-intro-theorem}
Suppose $g\in G$.
Theorem~\ref{fundamental-1} holds for $m$-chains with coefficients in $G$
and with ``multiplicity $g$'' in place of ``multiplicity $1$''
if and only $g$ satisfies the following
strong triangle inequality:
\begin{equation}\label{strong}
   |g| < \inf \{ |a|+|b|:  a, b\in G\setminus\{0\}, \, a+b=g\}.
\end{equation}
\end{theorem}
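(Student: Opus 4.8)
\emph{Plan of proof.}
I would prove the implication ``regularity $\Rightarrow$ \eqref{strong}'' by contraposition, exhibiting a single mass-minimizing chain that violates the conclusion of Theorem~\ref{fundamental-1}. Suppose \eqref{strong} fails. Since $|g|=|a+b|\le|a|+|b|$ always, this means $g=a+b$ for some $a,b\in G\setminus\{0\}$ with $|a|+|b|=|g|$. In $\RR^{m+1}$, writing points as $(x,y)\in\RR^m\times\RR$, let $\mathbf T_\delta$ be the locally finite $G$-chain given by the $m$-plane $\{y=0\}$ with multiplicity $a$ together with the parallel $m$-plane $\{y=\delta\}$ with multiplicity $b$, for a small parameter $\delta>0$. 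Then $\partial\mathbf T_\delta=0$; as $\delta\to0$ the restriction $\mathbf T_\delta\llcorner B(0,1)$ converges, in the flat topology and in mass, to $g$ times the unit $m$-disk in $\{y=0\}$ (which passes through the origin), so for $\delta$ small $\mathbf T_\delta\llcorner B(0,1)$ is as weakly close as required to a multiplicity-$g$ disk through the origin; yet $\mathbf T_\delta\llcorner B(0,1/2)$ is carried by two disjoint parallel $m$-disks bearing the distinct multiplicities $a$ and $b$, neither equal to $g$ (as $a,b\ne0$), so it is neither the graph of a single function over a domain in $\{y=0\}$ nor a manifold of multiplicity $g$, and the conclusion of Theorem~\ref{fundamental-1} fails. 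The one substantive point is that $\mathbf T_\delta$ is mass-minimizing, which I would prove by projection: if $\mathbf S$ agrees with $\mathbf T_\delta$ outside $B(0,1)$ and has the same boundary, and $\pi\colon\RR^{m+1}\to\RR^m$ forgets the last coordinate, then $\mathbf M(\mathbf S\llcorner\overline{B}(0,1))\ge\mathbf M(\pi_\#(\mathbf S\llcorner\overline{B}(0,1)))$ because $\pi$ is $1$-Lipschitz; but an $m$-dimensional $G$-chain in $\RR^m$ is determined by its boundary (a compactly supported flat $m$-cycle in $\RR^m$ vanishes), so $\pi_\#(\mathbf S\llcorner\overline{B}(0,1))=\pi_\#(\mathbf T_\delta\llcorner\overline{B}(0,1))$; and a direct computation --- in which the hypothesis $|a|+|b|=|g|$ is precisely what forces the two masses to agree --- gives $\mathbf M(\pi_\#(\mathbf T_\delta\llcorner\overline{B}(0,1)))=\mathbf M(\mathbf T_\delta\llcorner\overline{B}(0,1))$. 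Hence $\mathbf M(\mathbf S\llcorner\overline{B}(0,1))\ge\mathbf M(\mathbf T_\delta\llcorner\overline{B}(0,1))$, so $\mathbf T_\delta$ is minimizing. (A more vivid example in higher codimension is $a\,V_1+b\,V_2$, where $V_1=\{w=0\}$ and $V_2=\{w=v^2\}$ are holomorphic curves in $\CC^2$: its blow-up at the origin is a multiplicity-$g$ plane, and calibration by the K\"ahler form shows it is minimizing. But the parallel-plane example already suffices, and works for every $m\ge1$.)

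For the converse, assume \eqref{strong} holds. I claim the whole matter reduces to a \emph{no-splitting lemma}: there is $\delta_0>0$ such that if $\mathbf T$ is mass-minimizing with $\partial\mathbf T=0$ in $B(0,1)$, with $\mathbf M(\mathbf T\llcorner B(0,1))\le(|g|+\delta_0)\,\omega_m$, and with flat distance $<\delta_0$ from a multiplicity-$g$ disk through the origin, then $\mathbf T\llcorner B(0,3/4)$ is a rectifiable chain of multiplicity $g$ at $\mathcal H^m$-almost every point; that is, $\mathbf T\llcorner B(0,3/4)=g\cdot\Sigma$ for a single $m$-rectifiable set $\Sigma$. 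Granting the lemma: $\Sigma$, regarded as a multiplicity-one rectifiable chain, is area-minimizing (a competitor $\Sigma'$ yields the competitor $g\cdot\Sigma'$ for $\mathbf T$, of mass $|g|\,\mathcal H^m(\Sigma')$), and, from the hypotheses on $\mathbf T$, it is close to a disk in the sense required by the classical multiplicity-one interior regularity theorem (small flat distance and near-minimal mass); hence $\Sigma\cap B(0,1/2)$ is a $C^{1,\alpha}$ graph --- and then, by Schauder estimates for the minimal-surface system, a $C^{2,\alpha}$ graph --- of small norm over the disk's plane. Therefore $\mathbf T\llcorner B(0,1/2)$ is the multiplicity-$g$ graph demanded by Theorem~\ref{fundamental-1}.

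It remains to prove the no-splitting lemma, and this is the one place where \eqref{strong} is used. The strong triangle inequality supplies a positive \emph{splitting gap}
\[
\eta \ :=\ \inf\{|a|+|b| : a,b\in G\setminus\{0\},\ a+b=g\}\,-\,|g|\ >\ 0 ,
\]
and, applied inductively, it shows that every decomposition $g=c_1+\dots+c_k$ with $k\ge2$ and all $c_i\ne0$ has $\sum_i|c_i|\ge|g|+\eta$. I would combine this with the projection idea from the first part: for $r$ close to $1$, $\pi_\#(\mathbf T\llcorner B_r)$ does not increase mass and is an $m$-chain in the disk's plane $P$ whose boundary is flat-close to that of $g$ times the $r$-disk in $P$; by uniqueness of $m$-chains in $\RR^m$ with prescribed boundary, it is therefore flat-close to $g$ times that $r$-disk, of mass $|g|\,\omega_m r^m$. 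This forces $\mathbf M(\mathbf T\llcorner B_r)$ to lie within $o(1)$ of $|g|\,\omega_m r^m$, the projection to be nearly mass-preserving on $\mathbf T$ (so that $\mathbf T$ is nearly horizontal), and the net multiplicity of $\mathbf T$ over $\mathcal H^m$-a.e.\ point of $P$ to equal $g$. Now if, over a subset of $P$ of definite measure, this net multiplicity were carried by two or more distinct nearly-horizontal sheets of $\mathbf T$ with multiplicities $c_1,\dots,c_k$ ($k\ge2$, $c_i\ne0$), the gap would force $\mathbf M(\mathbf T\llcorner B_r)\ge(|g|+c\,\eta)\,\omega_m r^m$ for a fixed $c>0$, contradicting the near-equality once $\delta_0$ is small. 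Hence, off a subset of $P$ of arbitrarily small measure, $\mathbf T$ is a single nearly-horizontal sheet of multiplicity $g$; a final comparison argument --- using that, by \eqref{strong}, the multiplicity-$g$ disk is the \emph{unique} mass-minimizer spanning its boundary, together with the compactness of mass-minimizing $G$-chains (available here by \eqref{compact}) --- removes the exceptional set and gives the lemma.

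The crux of the whole argument, and the step I expect to be the main obstacle, is this last lemma --- specifically, the passage from the measure-theoretic conclusion ``the algebraic multiplicity is $g$ and the mass is nearly minimal'' to the pointwise conclusion ``the multiplicity is $g$ everywhere and the support is a single graphical sheet'', i.e.\ the exclusion of branch points and of tangential splitting. That is the unique place at which \eqref{strong} is genuinely needed; and, by the first part, it is a step that genuinely cannot be omitted, since when \eqref{strong} fails the splitting gap disappears and, as the projection computation for $\mathbf T_\delta$ shows explicitly, splitting into several sheets costs no mass --- so mass-minimizers do split and can fail to be graphs.
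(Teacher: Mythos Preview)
Your necessity argument has a genuine gap. From the failure of~\eqref{strong} you deduce that $g=a+b$ for some $a,b\in G\setminus\{0\}$ with $|a|+|b|=|g|$, but this inference is invalid: under~\eqref{compact} a minimizing sequence $(a_n,b_n)$ has a convergent subsequence, yet the limit may have $a=0$ (equivalently $b=g$), which happens precisely when $\inf_{x\in G\setminus\{0\}}|x|=0$. In that case your parallel-plane chain $a[\Sigma]+b[\Sigma+\delta e]$ is \emph{not} mass-minimizing, since the projection bound yields only $|g|\,\Hh^m(\Omega)<(|a|+|b|)\,\Hh^m(\Omega)$. The paper treats this case separately with a different, simpler counterexample: take $a_n\to 0$ in $G\setminus\{0\}$ and set $M_n=(g+a_n)[\Sigma]$; this is a single flat disk, hence trivially minimizing, converges to $g[\Sigma]$, yet has multiplicity $g+a_n\ne g$. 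When $\inf_{x\ne0}|x|>0$ your parallel-plane example is exactly the paper's.

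For sufficiency your route differs from the paper's. You aim first to prove a no-splitting lemma ($T\llcorner B(0,3/4)=g\cdot\Sigma$ for a single rectifiable set $\Sigma$) and then to hand $[\Sigma]$ to classical multiplicity-one integral regularity. The paper instead uses your projection/gap idea only in the weak form of Lemma~\ref{projection-lemma} (if $M$ projects to $g[\Omega]$ and has \emph{no} point with a multiplicity-$g$ tangent plane, then $|M|\ge(|g|+\eps)\,\Hh^m(\Omega)$), feeds this into a Bernstein-type statement (a global minimizer with $\Theta\le|g|$ that is $\delta$-close to a multiplicity-$g$ plane at all large scales \emph{is} such a plane --- monotonicity is what upgrades ``some multiplicity-$g$ tangent point exists'' to ``equals a plane''), and then runs a scale-selection blow-up to show $\Theta(M_i,p)\ge|g|$ at every point of the support. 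Allard's theorem then delivers the $C^{1,\alpha}$ graph directly, and the multiplicity $g$ drops out from flat convergence. The payoff is that the paper never has to carry out the step you yourself flag as the crux --- passing from ``a.e.\ fiber carries a single multiplicity-$g$ point'' to ``$T=g[\Sigma]$ with $[\Sigma]$ an honest integral cycle, no branch points, no tangential splitting'' --- because Allard, once fed the density lower bound, produces that structure for free. Your reduction to the integral theory is appealing and likely completable, but the compactness/comparison sketch you give for closing the no-splitting lemma is not yet a proof.
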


We remark that (as is easily shown) the strong triangle inequality~\eqref{strong}
is equivalent to

\begin{comment}
\begin{align*}
&\text{$\inf_{x\in G\setminus\{0\}}|x| > 0$, and} \\
&\text{$|g|<|a|+ |b|$ for all $a,b\in G\setminus\{0\}$ for which $a+b=g$.}
\end{align*}
\end{comment}

\begin{enumerate}[\upshape (i)]
\item $\inf_{x\in G\setminus\{0\}}|x| > 0$, and
\item $|g|<|a|+ |b|$ for all $a,b\in G\setminus\{0\}$ for which $a+b=g$.
\end{enumerate}

Simple examples show that the strong triangle inequality is necessary for the regularity
theorem to hold; see \S\ref{examples-section}.
Most of the paper is devoted to showing that sufficiency of the strong triangle inequality
follows from the Allard Regularity Theorem~\cite{allard}.
See~\cite{short-allard} for a greatly simplified proof of Allard's theorem.

\begin{comment}
Theorem~\ref{?} has the following corollary: 
if all elements of $G$ satisfy the strong triangle inequality, then a mass-minimizing
(or almost minimizing) $m$-chain is a $C^{1,\alpha}$ embedded manifold away
from a closed set of Hausdorff dimension $\le m-1$.
\end{comment}

Theorem~\ref{fundamental-1} holds more generally for integral flat chains that minimize (or almost minimize) 
parametric elliptic functionals. 
See~\cite{schoen-simon}, ~\cite{bombieri}, \cite{federer-book}*{5.3.14},
or (for the original, slightly different theorem)~\cite{almgren}.
I conjecture that Theorem~\ref{main-intro-theorem} also holds for parametric elliptic functionals.
(If the group has elements of order $2$, then the parametric elliptic integrand needs to be even for
the functional to make sense.)
The proofs in this paper rely on monotonicity and on the Allard Regularity Theorem and hence
only work for mass.

In a different direction, De Pauw and Z\"ust~\cite{depauw-zust} have proved a regularity theorem 
for mass-minimizing or almost mass-minimizing flat chains (with coefficients in an abelian group)
in possibly infinite dimensional Hilbert spaces.  Specifically,  they prove that 
if $\{ |x|: x\in G\}$ is a discrete set,  then the regular set of the chain is a dense open subset of the support.
(In the finite dimensional setting, and with the notion of almost minimizing used in this paper,
their result follows immediately from the Allard Regularity Theorem.)

\section{Preliminaries}

Throughout the paper, $G$ is a normed abelian group, i.e.,  an abelian group with a translation invariant metric; the norm
of an element is its distance to $0$.
We will always assume that $G$
is complete and satisfies the compactness property~\eqref{compact}.
In this paper, ``$m$-chain'' will mean ``flat $m$-chain with coefficients in $G$''.
We do not require that the chains have compact support.  Thus, in the terminology
of Federer's book~\cite{federer-book}, they
would be called ``locally flat chains''.  
The appendix of~\cite{white-duke-flat}
 indicates how to extend the theory
of compactly supported chains to the general case.
See~\cite{white-immiscible} for a quick introduction to flat chains with coefficients in a normed
abelian group.  Fleming's original paper~\cite{fleming} is the standard reference.
See also~\cite{white-deformation} and \cite{white-rectifiability},
 or, for a different approach,~\cite{depauw-hardt}.

If $M$ is an $m$-chain in $\RR^d$, we let $|M|$ denote its mass.  
If $M$ has locally finite mass 
(which will be the case for all $m$-chains that arise in this paper) and if $S$
is a Borel subset of $\RR^d$, then $M$ has a well-defined portion in $S$, 
denoted $M\llcorner S$.  Furthermore, $M$ determines a Radon
  measure $\mu_M$
on $\RR^d$ such that
\[
  \mu_M(S) = |M\llcorner S|
\]
for every Borel set $S$.

Except in \S\ref{examples-section}, we will be working with groups that also have the property:
\begin{equation}\label{discrete}
  \inf_{x\in G, \, x\ne 0} |x| > 0.
\end{equation}
(If $G$ does not satisfy~\eqref{discrete}, then no element of $G$ satisfies the
strong triangle inequality~\eqref{strong}.)
For coefficient groups satisfying~\eqref{discrete}, every compact set is finite.  
In particular, by~\eqref{compact},
\[
  \text{If $R<\infty$ then $\{x\in G: |x|\le R\}$ is a finite set.}
\]

For such normed groups $G$, every $m$-chain $M$ of finite (or locally finite) mass 
is rectifiable~\cite{white-rectifiability}.  That is, $M$ can be written as
\[
   M = \sum_{i=1}^\infty g_i [S_i],
\]
where the $g_i\in G$, where the $S_i$ are disjoint Borel sets, 
and and where each $S_i$ is contained in a $C^1$, oriented $m$-manifold $\Sigma_i$. 
If $W$ is a Borel subset of the ambient space, 
then
\begin{align*}
M\llcorner W &= \sum_{i=1}^\infty g_i [S_i\cap W], \\
|M| &= \sum_{i=1}^\infty |g_i| \,\Hh^m(S_i), \\
   \mu_M(W) &= \sum_{i=1}^\infty |g_i| \, \Hh^m(S_i\cap W).
\end{align*}

We also define flat norms $\Ff(\cdot)$ and $\Ff(\cdot,K)$
and flat seminorms $\Ff_W(\cdot)$ as follows.

\begin{definition}\label{flat}
Let $M$ be a flat $m$-chain in $\RR^d$.
 We define
 $\Ff(M)$ to be the infimum 
of
\begin{equation}\label{flat-expression}
   |M-\partial Q| + |Q|.
\end{equation}
over all $(m+1)$-chains $Q$.

If $K$ is a closed subset of $\RR^d$, we define $\Ff(M;K)$
to be the infimum of~\eqref{flat-expression} among all $(m+1)$-chains $Q$ supported in $K$.

If $U$ is an open subset of $\RR^d$, we let
\[
  \Ff_U(M)
\]
be the infimum of 
\[
  |(M-\partial Q)\llcorner U| + |Q\llcorner U|
\]
over all finite-mass $(m+1)$-chains $Q$.
\end{definition}

The term $(M-\partial Q)\llcorner U$ needs explanation, since we have only defined $A\llcorner U$
when $A$ has locally finite mass.  
Suppose $A$ is any $m$-chain and $U$ is an open set.  If there is a $m$-chain $B$ supported in $U^c$
such that $A-B$ has locally finite mass, then we let $A\llcorner U= (A-B)\llcorner U$; otherwise 
we leave $A\llcorner U$ undefined.  If $A\llcorner U$ is undefined, we take $|A\llcorner U|$ to be infinite.

Note that if $U\subset W$, then
\[
  \Ff_U(M)\le \Ff_W(M).
\]

  If $\overline{U}$ is compact, then $\Ff_U(M)<\infty$.
It follows that if $M$ is compactly supported, then $\Ff(M)<\infty$.
We say that $M_n$ converges (weakly) to $M$ and write $M_n\to M$ provided
\[
  \Ff_U(M_n-M)\to 0
\]
for all bounded open sets $U$ of $\RR^d$.  If $M_i$ and $M$ are supported
in a compact set, then $M_n\to M$ if and only if $\Ff(M_n-M)\to 0$.

\section{$\lambda$-Minimizing Chains}\label{almost-section}

\begin{definition}
For $\lambda\in [0,\infty)$,
an $m$-chain $M$ of locally finite mass
in $\RR^d$ is called {\bf $\lambda$-minimizing} provided it has the following property.
If $K$ is a compact subset of $U$
and if $Q$ is an $(m+1)$-chain compactly supported in 
\[
   \{x: \dist(x, \spt M)< r\},
\]
then 
\begin{equation}\label{almost-bound}
   (1 - \lambda r) \, |M\llcorner K|  \le  |M\llcorner K + \partial Q|.
\end{equation}
\end{definition}

Thus ``$0$-minimizing'' is the same as ``mass-minimizing".

\begin{remark}\label{almost-remark}
If $M$ is $\lambda$-minimizing, then~\eqref{almost-bound}
also follows from the slightly weaker hypothesis
\[
 \spt Q  \subset \{x:\dist(x,\spt M)\le r\},
\]
for, in that case, 
\[
   (1-\lambda R)\, |M\llcorner K| \le |M\llcorner K + \partial Q|
\]
holds for every $R>r$ and therefore also for $R=r$. 
\end{remark}

If $C$ is a closed subset of $\RR^d$, if $x\in \RR^d$, and if there is a unique point $y\in C$
closest to $x$, we let $\pi_C(x)=y$; otherwise $\pi_C(x)$ is not defined (i.e., $x$ is not in the domain
of $\pi_C$.)  
We define $\reach(C)$ to be the smallest number $R$ such that 
\[
    \{x: \dist(x,C)<R\}
\]
is contained in the domain of $\pi_C$.  

\begin{theorem}
Suppose $C\subset \RR^d$ is a set with $R:=\reach(C)>0$.
Suppose that $M$ is homologically minimizing in $C$ i.e., that $M$ is supported in $C$,
and that if $K$ is a compact subset of $\spt M$, 
then
\[
   |M\llcorner K| \le |M\llcorner K + \partial Q|
\]
for every $(m+1)$-chain $Q$ supported in $C$.
Then $M$ is $(m/R)$-minimizing.
\end{theorem}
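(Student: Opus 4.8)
The plan is to reduce the homologically minimizing hypothesis in $C$ to the $\lambda$-minimizing condition by composing competitors with the nearest-point projection $\pi_C$. Let $K$ be a compact subset of $U$ and let $Q$ be an $(m+1)$-chain compactly supported in $\{x:\dist(x,\spt M)<r\}$, where $r<R=\reach(C)$ (for $r\ge R$ there is nothing to prove since $1-(m/R)r\le 0$). Since $\spt M\subset C$, the support of $Q$ lies in $\{x:\dist(x,C)<r\}\subset\{x:\dist(x,C)<R\}$, which is contained in the domain of $\pi_C$. The key step is then to push $Q$ forward: set $Q':=(\pi_C)_\#Q$, an $(m+1)$-chain supported in $C$. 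First I would check that $\pi_C$ is Lipschitz on $\{x:\dist(x,C)<r\}$ with Lipschitz constant controlled in terms of $r/R$ — this is the standard estimate for nearest-point projections onto sets of positive reach — so that $Q'$ is a well-defined finite-mass chain and $\partial Q'=(\pi_C)_\#\partial Q$.

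Next I would apply the homological minimizing property of $M$ in $C$ with the competitor $Q'$. Because $\pi_C$ restricts to the identity on $C$ and $M\llcorner K$ is supported in $C$, one has $(\pi_C)_\#(M\llcorner K)=M\llcorner K$, so
\[
  |M\llcorner K|\le |M\llcorner K+\partial Q'|=|(\pi_C)_\#(M\llcorner K+\partial Q)|\le (\operatorname{Lip}(\pi_C|_{\spt Q}))^m\,|M\llcorner K+\partial Q|.
\]
The remaining task is to make the constant explicit. The hard part will be pinning down the sharp Lipschitz bound: the standard fact is that if $\dist(x,C)\le s<R$ then $\pi_C$ is differentiable a.e. near $x$ with operator norm of the derivative at most $R/(R-s)$, hence on $\spt Q\subset\{\dist(\cdot,C)\le r\}$ (after the usual approximation passing from $<r$ to $\le r$ as in Remark~\ref{almost-remark}) we get $\operatorname{Lip}(\pi_C|_{\spt Q})\le R/(R-r)$. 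Therefore $|M\llcorner K|\le (R/(R-r))^m|M\llcorner K+\partial Q|$, i.e.
\[
  \Bigl(\frac{R-r}{R}\Bigr)^m|M\llcorner K|\le |M\llcorner K+\partial Q|.
\]

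Finally I would conclude by the elementary inequality $(1-t)^m\le\bigl((1-t)^m\bigr)$... more precisely $\bigl(\tfrac{R-r}{R}\bigr)^m=(1-r/R)^m\ge 1-mr/R=1-(m/R)r$, which gives exactly the desired bound $(1-(m/R)r)|M\llcorner K|\le |M\llcorner K+\partial Q|$, so $M$ is $(m/R)$-minimizing. The only genuine subtlety — and the step I expect to be the main obstacle — is the careful justification of the pushforward $(\pi_C)_\#$ on flat chains of locally finite mass and the sharp Jacobian/Lipschitz estimate for $\pi_C$ near a set of positive reach; everything else is formal. One should also note that $Q'$ need not be compactly supported in the same neighborhood, but since it is supported in $C$ and $M$ is homologically minimizing for \emph{all} $(m+1)$-chains supported in $C$, compact support of the competitor is all that is needed, and $(\pi_C)_\#Q$ inherits compact support from $Q$.
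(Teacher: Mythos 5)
Your proposal is correct and follows essentially the same route as the paper: push the competitor $Q$ forward by the nearest-point projection $\pi_C$ (Lipschitz with constant $R/(R-r)$ on $\{\dist(\cdot,C)\le r\}$ by Federer's reach estimates), use that $\pi_C$ fixes $M\llcorner K$, and finish with $(1-r/R)^m\ge 1-mr/R$. No gaps.
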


\begin{proof}
For $r<R$, the restriction $\pi$ of $\pi_C$ to $\{x: \dist(x,C)\le r\}$
is Lipschitz with Lipschitz constant $\le R/(R-r)$. 
 (See~\cite{federer-curvature}*{Theorem~4.8(8)}.)
Suppose that $Q$ is an $(m+1)$-chain supported in 
\[
   \{x: \dist(x,K)\le r\} \subset \{x: \dist(x,C)\le r\}.
\]
Then
\begin{align*}
|M\llcorner K|
&\le
|M\llcorner K + \partial (\pi_\#Q)|  \\
&=
| \pi_\#( M\llcorner K + \partial Q) |    \\
&\le
\left( \frac{R}{R-r}\right)^m |M\llcorner K + \partial Q|
\\
&=
\left( 1 - \frac{r}R\right)^{-m} |M\llcorner K + \partial Q|,
\end{align*}
so
\begin{align*}
|M\llcorner K + \partial Q|
&\ge
\left(1 - \frac{r}{R} \right)^m |M\llcorner K|  \\
&\ge
\left( 1  -  \frac{m}R r \right) |M\llcorner K|.
\end{align*}
\end{proof}

\begin{theorem}
Suppose $M$ is a rectifiable, $\lambda$-minimizing $m$-chain in $\RR^d$.
Then in $\RR^d\setminus\spt(\partial M)$, the varifold $V$ associated to $M$ has  mean curvature bounded by $\lambda$.
\end{theorem}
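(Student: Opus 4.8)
The plan is the classical one: compare $M$ with its push-forward under the flow of a test vector field, and then differentiate at $t=0$. Let $M=\sum_i g_i[S_i]$ be the rectifiable representative, and let $V$ be the associated rectifiable $m$-varifold --- its weight is $\|V\|=\mu_M$, its approximate tangent plane at $\mu_M$-a.e.\ $x$ is the tangent plane $T_x$ of $\bigcup_i S_i$, and $\delta V(X)=\int\ddiv_{T_x}X\,d\mu_M(x)$. The conclusion amounts to the inequality $\|\delta V\|\le\lambda\|V\|$ of Radon measures, and it suffices to prove the a priori weaker bound
\[
  |\delta V(X)|\ \le\ \lambda\,\|X\|_{C^0}\,\mu_M(\spt X)
\]
for every $X\in C^1_c(\RR^d\setminus\spt(\partial M);\RR^d)$; indeed, taking the supremum over such $X$ supported in a fixed open set $A$ with $\|X\|_{C^0}\le 1$ gives $\|\delta V\|(A)\le\lambda\|V\|(A)$, hence $\|\delta V\|\le\lambda\|V\|$, which is precisely the statement that $V$ has mean curvature bounded by $\lambda$ on $\RR^d\setminus\spt(\partial M)$ (and forces a mean curvature vector of length $\le\lambda$ $\mu_M$-a.e., whence also the sharp $|\delta V(X)|\le\lambda\int|X|\,d\|V\|$).

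To prove the displayed bound, fix $X$, put $c=\|X\|_{C^0}$, choose a compact $K\supset\spt X$ with $K\cap\spt(\partial M)=\emptyset$, and let $\{\phi_t\}_{t\in\RR}$ be the flow of $X$; each $\phi_t$ is a diffeomorphism of $\RR^d$ equal to the identity off $\spt X$. First I would apply the homotopy formula for flat chains (as in~\cite{fleming}) to $h(s,x)=\phi_s(x)$ on $[0,t]\times\RR^d$: it writes $\phi_{t\#}M-M$ as $\partial Q_t$ plus a term supported in $h([0,t]\times\spt(\partial M))$, where $Q_t:=h_\#([0,t]\times M)$; the extra term vanishes because $\phi_s$ is stationary near $\spt(\partial M)$, and for the same reason the part of $Q_t$ carried by $M$ outside $K$ vanishes. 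Thus $Q_t$ is a compactly supported $(m+1)$-chain with $\spt Q_t\subset\{x:\dist(x,\spt M)\le |t|\,c\}$ (from $|\phi_s(x)-x|\le |s|\,c$), and, since $\phi_{t\#}M$ agrees with $M$ off $K$, $\phi_{t\#}(M\llcorner K)=M\llcorner K+\partial Q_t$. Applying the definition of ``$\lambda$-minimizing'' with this $K$, with $Q=Q_t$, and with $r=|t|\,c$ (legitimate by Remark~\ref{almost-remark}),
\[
  (1-\lambda|t|\,c)\,|M\llcorner K|\ \le\ |M\llcorner K+\partial Q_t|\ =\ |\phi_{t\#}(M\llcorner K)|.
\]

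It remains to expand $|\phi_{t\#}(M\llcorner K)|$ in $t$. Since $M$ is rectifiable and $\phi_t$ is a diffeomorphism, the sets $\phi_t(S_i\cap K)$ are pairwise disjoint, so no cancellation occurs and the area formula gives $|\phi_{t\#}(M\llcorner K)|=\sum_i|g_i|\,\Hh^m(\phi_t(S_i\cap K))=\int_K J\phi_t(x)\,d\mu_M(x)$, with $J\phi_t(x)$ the $m$-dimensional Jacobian of $\phi_t$ restricted to $T_x$. As $X\in C^1_c$, there are $t_0>0$ and $C<\infty$ with $|J\phi_t(x)-1-t\,\ddiv_{T_x}X(x)|\le Ct^2$ for $x\in K$ and $|t|\le t_0$; since $\mu_M(K)<\infty$ this yields $|\phi_{t\#}(M\llcorner K)|=|M\llcorner K|+t\int_K\ddiv_{T_x}X\,d\mu_M+O(t^2)$. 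Substituting this into the previous inequality, dividing by $t$, and letting $t\to 0^+$ and then $t\to 0^-$, the factors $(1-\lambda|t|c)$ and the $O(t^2)$ terms wash out and leave $\big|\int_K\ddiv_{T_x}X\,d\mu_M\big|\le\lambda c\,\mu_M(K)=\lambda\,\|X\|_{C^0}\,\mu_M(\spt X)$, as wanted.

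The point needing care is the deformation step: one must check that $Q_t$ is genuinely compactly supported \emph{and} that it lies within distance $|t|\,\|X\|_{C^0}$ of $\spt M$ --- not merely inside $K$ --- since it is exactly this that drives the $\lambda$-minimizing error $\lambda r=\lambda|t|c$ to $0$ as $t\to 0$; had we only known $\spt Q_t\subset K$ the argument would yield nothing. The Jacobian expansion is the standard first variation of area underlying Allard's theorem, and rectifiability of $M$ enters only through it (to know that push-forward multiplies mass by the $m$-dimensional Jacobian and that the pieces $\phi_t(S_i\cap K)$ do not cancel); the concluding measure-theoretic step is immediate.
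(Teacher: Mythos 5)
Your proposal is correct and follows essentially the same route as the paper: push $M\llcorner K$ forward under a one-parameter deformation generated by $X$, note that the interpolating $(m+1)$-chain $Q_t$ is supported within distance $O(|t|)$ of $\spt M$ so that the $\lambda$-minimizing inequality applies with $r=|t|\,\|X\|_{C^0}$, and then differentiate the area at $t=0$ via the first variation formula. The only (immaterial) differences are that you use the flow of $X$ where the paper uses $\phi_t(p)=p+tX(p)$, and that you spell out the homotopy-formula and Jacobian-expansion details that the paper leaves implicit.
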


\begin{proof}
Let $X$ be smooth vectorfield supported in a compact subset $C$ of $\RR^d\setminus \spt(\partial M)$
with $|X(p)|\le 1$ for all $p$.
Let $K=C\cap \spt M$.
Let $\phi_t(p)= p + tX(p)$.  Let $\Sigma_t = \phi_\#(M\llcorner K)$.
Let $Q_t$ be the image of 
\[
    (M\llcorner K)\times [0,t] 
\]
under the map $(p,\tau)\mapsto \phi_\tau(p)$.

Then
\[
  (1 - \lambda t) |M\llcorner K| \le |\Sigma_t|
\]
Thus, differentiating, and using the first variation formula,
\[
     - \lambda \mu_V(K) \le \delta(V;X).
\]
Since the same holds for $-X$, we see that
\[
   |\delta(V;X)| \le \lambda \mu_V(K),
\]
which is equivalent to the assertion of the theorem.
\end{proof}

\begin{theorem}\label{convergence-theorem}
Suppose that $M_i$ and $M$ are $m$-chains of locally finite mass in $\RR^d$,
 that $M_i$ is $\lambda$-minimizing,
 and that $M_i\to M$.
Then $M$ is $\lambda$-minimizing and
\[
  \mu_{M_i} \to \mu_M.
\]
\end{theorem}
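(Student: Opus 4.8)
The plan is to extract both conclusions from two soft inputs — lower semicontinuity of mass and, as a corollary, a one‑sided semicontinuity of supports — together with a single geometric construction, a cut‑and‑paste that turns the $\lambda$‑minimality of the $M_i$ into estimates for $M$. The soft inputs are: \textbf{(a)} mass is lower semicontinuous under flat convergence, i.e.\ $\mu_M(U)\le\liminf_i\mu_{M_i}(U)$ for every bounded open $U$; and \textbf{(b)} if $S\subseteq\spt M$ is compact and $\eps>0$, then $S\subseteq\{x:\dist(x,\spt M_i)<\eps\}$ for all large $i$. Input (a) is standard for the flat (semi)norm, and (b) follows from it: were (b) false, a subsequence of the $M_i$ would vanish on a fixed ball centred at a point of $\spt M$, contradicting (a). Two consequences of (a), (b) will be used repeatedly. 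First, since $\mu_M$ is carried by $\spt M$ one has $M\llcorner K=M\llcorner(K\cap\spt M)$ for every Borel $K$, so in checking $\lambda$‑minimality it suffices to treat compact sets $K\subseteq\{x:\dist(x,\spt M)<r\}$ with $\mu_M(\partial K)=\mu_{\partial Q}(\partial K)=0$ (the general case follows by outer approximation of $K$ by $\{x:\dist(x,K)\le s\}$, and we may assume $\partial Q$ has finite mass, else the inequality is vacuous). Secondly, any such $K$, and more generally any finite‑mass $(m+1)$‑chain compactly supported in $\{x:\dist(x,\spt M)<r\}$, is, for all large $i$, compactly supported in $\{x:\dist(x,\spt M_i)<r'\}$ for any prescribed $r'>r$.

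\emph{The construction.} Fix such a $K$, a competitor $Q$ (finite mass, $\spt Q$ compact in $\{x:\dist(x,\spt M)<r\}$, $\lambda r<1$), and a large ball $B$ containing everything relevant. As $M_i\to M$, for large $i$ there are finite‑mass $(m+1)$‑chains $S_i$ with $|R_i\llcorner B|+|S_i\llcorner B|=:\eps_i\to 0$, where $R_i:=M_i-M-\partial S_i$. Let $u$ be a $1$‑Lipschitz function (the distance to $K$, or to a point $x$, depending on the estimate sought). By the coarea inequality $\int|\langle S_i,u,t\rangle|\,dt\le|S_i\llcorner B|$ pick a level $t_i\to 0$ with $|\langle S_i,u,t_i\rangle|\to 0$ and $\mu_M(\{u=t_i\})=\mu_{M_i}(\{u=t_i\})=\mu_{\partial Q}(\{u=t_i\})=0$ (all but countably many $t$ in a short interval work). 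Replace $Q$ by $Q_i:=Q-S_i\llcorner\{u<t_i\}$, which by the previous paragraph is an admissible competitor for $M_i$, and feed it into the defining inequality for $M_i$ on the compact set $K':=\{u\le t_i\}$. The slice identity $\partial(S_i\llcorner\{u<t_i\})=(M_i-M-R_i)\llcorner\{u<t_i\}+\langle S_i,u,t_i\rangle$ gives
\[
M_i\llcorner K'+\partial Q_i=\bigl((M+R_i)\llcorner\{u<t_i\}+(\partial Q)\llcorner\{u<t_i\}\bigr)-\langle S_i,u,t_i\rangle+(\partial Q)\llcorner\{u>t_i\},
\]
a sum of chains supported on $\{u<t_i\}$, $\{u=t_i\}$, $\{u>t_i\}$ respectively, hence with additive masses; so its mass is $|M\llcorner K+\partial Q|+o(1)$ as $i\to\infty$ (using $t_i\to0$ and $\eps_i\to0$), while $\liminf_i|M_i\llcorner K'|\ge\mu_M(K)=|M\llcorner K|$ by (a). Passing to the limit yields, in the two relevant specializations: \textbf{(i)} $(1-\lambda r)\,|M\llcorner K|\le|M\llcorner K+\partial Q|$, i.e.\ $M$ is $\lambda$‑minimizing; and \textbf{(ii)} with $Q=0$ and $u=\dist(\cdot,x)$ and $t_i$ near a fixed small radius $\rho$, $(1-\lambda\rho)\,\limsup_i\mu_{M_i}(\overline{B(x,\rho)})\le\mu_M(\overline{B(x,\rho)})$ for every $x$ and almost every small $\rho$.

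It remains to see $\mu_{M_i}\to\mu_M$. By (ii) the measures $\mu_{M_i}$ are locally uniformly bounded, so it suffices to show every weak‑$*$ subsequential limit $\mu$ equals $\mu_M$. From (a), $\mu\ge\mu_M$; from (ii), the nonnegative measure $\nu:=\mu-\mu_M$ satisfies $\nu(\overline{B(x,\rho)})\le\tfrac{\lambda\rho}{1-\lambda\rho}\,\mu_M(\overline{B(x,\rho)})$ for every $x$ and a.e.\ small $\rho$; hence $\nu$ is locally dominated by a multiple of $\mu_M$, is therefore absolutely continuous with respect to $\mu_M$, and has $\mu_M$‑density zero at $\mu_M$‑a.e.\ point (differentiation of measures along good radii, where the right side tends to $0$). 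Thus $\nu=0$, and since the limit is independent of the subsequence, $\mu_{M_i}\to\mu_M$.

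The step I expect to be the main obstacle is \emph{the construction}. The underlying difficulty is that flat convergence does not commute with restriction to a closed set — there can be cancellation across $\partial K$ — so one cannot simply pass to the limit in ``$(1-\lambda r)|M_i\llcorner K|\le|M_i\llcorner K+\partial Q|$''. Slicing the connecting chains $S_i$ at a generic level of a distance function is exactly what repairs this; the real work is the bookkeeping — controlling the several error terms uniformly in $i$, checking that the modified competitor $Q_i$ stays within distance $<1/\lambda$ of $\spt M_i$, and carrying out the elementary measure‑theoretic reductions on $K$ and $Q$ indicated above.
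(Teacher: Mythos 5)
Your proposal is correct and follows essentially the same route as the paper: the paper's appendix lemma and Theorem~\ref{flamingo-theorem} are exactly your coarea/slicing step (choosing generic levels of a distance function at which the connecting chains have small slice, so that flat convergence localizes and the restricted connecting chain can be absorbed into the competitor), and the limit passage and two-sided mass bound are the same as in the paper, which then concludes $\mu=\mu_M$ by a direct sandwich $(1-\lambda r)\mu(K[s])\le\mu_M(K[s])\le\mu(K[s])$ rather than your differentiation-of-measures argument. One small point of precision: the bound $\liminf_i|M_i\llcorner K'|\ge\mu_M(K)$ does not follow from (a) applied to a fixed open set, since $K'=\{u\le t_i\}$ shrinks with $i$; it follows instead from your own slice identity, which exhibits $M_i\llcorner K'\to M\llcorner K$ in the flat norm, after which lower semicontinuity of mass under flat convergence applies.
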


\begin{proof}
By passing to a subsequence, we may suppose that $\spt M_i$ converges
to a closed set $X$.  Thus
\[
  \dist(\cdot, \spt M_i) \to \dist(\cdot, X).
\]
In particular, if $V$ is any compact subset of $U$,
then
\[
  \max_{q\in V} \dist(q,\spt M_i) \to \max_{q\in V} \dist(q, X).
\]

Let $0< r < 1/\lambda$.
Let $Q$ be an $(m+1)$-chain compactly supported in 
\[
  \{x\in U:  \dist(x, X) < r\}.
\]
Thus
\[
   \max_{\spt Q} \dist(\cdot, X) < r.
\]
Hence
\[
 \max_{\spt Q}\dist(\cdot, M_i) < r
\]
for all sufficiently large $i$.

Now let $K$ be a compact subset of $X$, let $f:\RR^d\to \RR$ be the distance function to $K$,
and let
\begin{align*}
K[s] &= \{f\le s\} = \{x:\dist(x,K)\le r\}, \\
M[s] &= M\llcorner K[s], \\
M_i[s] &= M_i\llcorner K[s].
\end{align*}
By passing to a subsequence, we can 
assume that for almost every $s\in (0,r)$,
\[
  \Ff( M_i[s] - M[s]; K[s]) \to 0.
\]
(See Theorem~\ref{flamingo-theorem} in the appendix.)
For such an $s$, there exist $(m+1)$-chains $Q_i$ supported in $K[s]$
such that
\[
  |M_i[s] - M[s] + \partial Q_i| + |Q_i| \to 0.
\]
Now
\[
   \max_{K[s]}\dist(\cdot,X) \le s < r.
\]
Thus
\[
  \max_{K[s]} \dist(\cdot, \spt M_i) < r
\]
for all sufficiently large $i$.  In particular,
\[
  \max_{\spt Q_i} \dist(\cdot, \spt M_i) < r
\]
for all sufficiently large $i$.

Now
\begin{align*}
(1-\lambda r) |M_i[s]|
&\le
|M_i[s]  - \partial (Q+Q_i) |
\\
&\le
|M[s] - \partial Q| + |M_i[s] - M[s] - \partial Q_i|
\end{align*}
Thus
\begin{equation}\label{george}
(1-\lambda r) \limsup_i |M_i[s]| 
\le 
|M[s] - \partial Q|.
\end{equation}

In particular (letting $Q=0$), we see that every compact set $K$ in $X$
is contained in the interior of a set $K[s]$ for which $\limsup_i |M\llcorner K[s]| <\infty$.
Thus, after passing to a subsequence, the Radon measures $\mu_i=\mu_{M_i}$
converge weakly to a Radon measure $\mu$ on $U$.
\begin{comment}
By lower-semicontinuity of mass,
\[
   \mu_M \le \mu.
\]
\end{comment}

By~\eqref{george} and lower semicontinuity of mass with respect to flat convergence, 
\begin{equation}\label{pongo}
\begin{aligned}
(1-\lambda r)|M[s]|
&\le
(1-\lambda r) \liminf_i|M_i[s]|
\\
&\le 
(1-\lambda r) \limsup_i |M_i[s]| 
\\
&\le 
|M[s] - \partial Q|.
\end{aligned}
\end{equation}
Letting $s\to 0$ gives
\[
(1-\lambda r)|M[0]| \le |M[0] - \partial Q|.
\]
Hence $M$ is $\lambda$-minimizing.

Now consider~\eqref{pongo} in the case $Q=0$.
For almost every $s$, $\mu_M(K[s])\to \mu(K[s])$.
For such $s$, we can rewrite~\eqref{pongo} as
\[
(1-\lambda r)\mu_M(K[s])
\le 
(1-\lambda r) \mu(K[s])
\le
\mu_M(K[s]).
\]
Letting $s\to 0$ and then $r\to 0$ gives
\[
   \mu_M(K)\le \mu(K) \le \mu_M(K).
\]
\end{proof}

 \section{Main Theorem}
 
Throughout this section, we assume that $g$ satisfies the strong triangle inequality,
and we prove the fundamental regularity theorem under that assumption.
Note that the strong triangle inequality for $g$ is 
equivalent to the existence of an $\eps>0$ such that
\begin{equation}\label{strong-2}
   |a|+|b|\ge |g|+ \eps \quad\text{if $a+b=g$ and $a,b\in G\setminus\{0\}$}.
\end{equation}
\begin{comment}
Given~\eqref{strong-2}, note that if $x\notin\{0,g\}$, then
\[
   |g|+\eps \le |g-x| + |x| \le |g| + 2|x|,
\]
and so $|x|\ge \frac{\eps}2$.  Thus
\[
   \text{$|x|\ge \min\{|g|, \eps/2\}$ for $x\in G\setminus\{0,g\}$}.
\]
\end{comment}
This implies that
\begin{equation}\label{discrete-2}
 \inf_{x\in G, \, x\ne 0} |x| > 0.
\end{equation}
(Indeed, if $x\ne 0$, then $|x|\ge \min\{\eps/2,\,|g|\}$.)

The condition~\eqref{discrete-2} implies that every 
flat chain of locally finite mass is rectifiable~\cite{white-rectifiability}.
 
\begin{lemma}\label{projection-lemma}
Suppose $M$ is a rectifiable, compactly supported $m$-chain such that
\[
  \pi_\#M = g[\Omega],
\]
where $\Omega$ is a Borel subset of an oriented $m$-plane $P$
and where $\pi:M\to P$ is  the orthogonal projection.
Suppose also that $M$ has no points of multiplicity $g$.
 Then
\[
  |M| \ge (|g|+\eps)\Hh^m(\Omega).
\]
\end{lemma}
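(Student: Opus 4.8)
\emph{Plan of proof.}
The plan is to bound $|M|$ from below by transferring the computation to the plane $P$ via $\pi$ and integrating a pointwise estimate over the fibers of $\pi$.

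First I would normalize the rectifiable representation of $M$. Write $M=\sum_i g_i[S_i]$ with $g_i\in G\setminus\{0\}$; by~\eqref{discrete-2} we have $c_0:=\inf_{x\ne 0}|x|>0$, so, $M$ being compactly supported (hence of finite mass), $\sum_i\Hh^m(S_i)\le |M|/c_0<\infty$. After a further countable decomposition and deletion of an $\Hh^m$-null set, I would arrange that each $S_i$ is a Borel subset of a $C^1$ graph $\Gamma_i$ over $P$, except for a ``vertical'' remainder on which the approximate tangent plane fails to project isomorphically onto $P$; that remainder contributes nothing to $\pi_\#M$ but only adds to $|M|$, so it may be discarded. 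Orienting each $\Gamma_i$ by pulling back the orientation of $P$ through the diffeomorphism $\pi|_{\Gamma_i}$, the hypothesis ``$M$ has no points of multiplicity $g$'' becomes simply $g_i\ne g$ for every $i$. Since $\pi|_{\Gamma_i}$ is an orientation-preserving diffeomorphism, $\pi_\#(g_i[S_i])=g_i[\pi(S_i)]$, and since $\pi$ is $1$-Lipschitz and $\sum_i|g_i|\Hh^m(\pi(S_i))\le |M|<\infty$, the pushforward adds up in mass: $\pi_\#M=\sum_i g_i[\pi(S_i)]$. Comparing with $\pi_\#M=g[\Omega]$ and writing $I(y)=\{i:y\in\pi(S_i)\}$ (finite for $\Hh^m$-a.e.\ $y$, by summability), I get, for a.e.\ $y\in P$,
\[
   \tsum_{i\in I(y)}g_i = g\,\mathbf 1_\Omega(y).
\]
(Here I assume $g\ne 0$, which holds in every application and without which the statement fails.)

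The heart of the matter is the fiberwise estimate: for $\Hh^m$-a.e.\ $y\in\Omega$,
\[
   \tsum_{i\in I(y)}|g_i|\ \ge\ |g|+\eps .
\]
This is exactly where the hypothesis ``no points of multiplicity $g$'' is needed. Since $g\ne 0$ we have $\#I(y)\ge 1$, and $\#I(y)\ne 1$: if $I(y)=\{i_0\}$ then $g_{i_0}=g$, a contradiction; so $\#I(y)\ge 2$. Fixing some $i_0\in I(y)$ and setting $a=g_{i_0}$ and $b=g-a=\sum_{i\in I(y),\,i\ne i_0}g_i$, we have $a\ne 0$ and $b\ne 0$ (the latter because $g_{i_0}\ne g$), so the strong triangle inequality~\eqref{strong-2} gives $|a|+|b|\ge |g|+\eps$, and hence $\sum_{i\in I(y)}|g_i|=|a|+\sum_{i\ne i_0}|g_i|\ge |a|+\big|\sum_{i\ne i_0}g_i\big|=|a|+|b|\ge |g|+\eps$.

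Finally I would integrate. Since $\pi$ is $1$-Lipschitz, $\Hh^m(S_i)\ge\Hh^m(\pi(S_i))$, so by Tonelli's theorem and the fiberwise bound (noting $\Omega\subset\bigcup_i\pi(S_i)$ up to a null set, as the multiplicity $g$ of $\pi_\#M$ is nonzero a.e.\ on $\Omega$),
\[
   |M|=\tsum_i|g_i|\,\Hh^m(S_i)\ \ge\ \tsum_i|g_i|\,\Hh^m(\pi(S_i))=\int_P\tsum_i|g_i|\,\mathbf 1_{\pi(S_i)}(y)\,d\Hh^m(y)\ \ge\ \int_\Omega\big(|g|+\eps\big)\,d\Hh^m(y)=(|g|+\eps)\,\Hh^m(\Omega).
\]
I do not expect a genuine analytic obstacle here: the only real work is the bookkeeping in the first paragraph — reducing the pushforward of the rectifiable chain to the clean form $\sum_i g_i[\pi(S_i)]$ with orientations compatible with $P$, and making sure ``multiplicity $g$'' is read relative to that orientation, so that every nonempty fiber over $\Omega$ contains at least two sheets. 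After that, one application of the strong triangle inequality per fiber completes the proof.
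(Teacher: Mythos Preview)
Your argument is correct and follows essentially the same route as the paper: decompose $M$ into oriented $C^1$ pieces compatible with $P$, use $\pi_\#M=g[\Omega]$ to obtain the fiber identity $\sum \gamma(p)=g$, invoke the strong triangle inequality~\eqref{strong-2} fiberwise, and integrate using that $\pi$ is $1$-Lipschitz. The paper compresses the integration into a single area-formula line $\int_S|\gamma|\,d\Hh^m\ge\int_\Omega\sum_{p\in S\cap\pi^{-1}x}|\gamma(p)|\,dx$ rather than your two-step $\Hh^m(S_i)\ge\Hh^m(\pi(S_i))$ plus Tonelli, and it records the slightly stronger consequence $g_i\notin\{g,-g\}$ (since $-g[T]=g[-T]$ is also a multiplicity-$g$ plane), though, as you observed, only $g_i\ne g$ is needed for the fiber estimate.
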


Here, ``point of multiplicity $g$'' means ``point that has a plane of  multiplicity $g$ 
as a tangent cone".

\begin{proof}
Since $M$ is rectifiable, it can be written
as
\[
   M = \sum_i g_i[S_i],
\]
where the $S_i$ are disjoint Borel sets and each $S_i$ is contained in an oriented $C^1$
  $m$-manifold $\Sigma_i$.
By subdividing the $S_i$ and $\Sigma_i$ and reorienting (if necessary), we can assume that
if $p\in S_i$ and if 
\[
    \pi\vert \Tan(\Sigma_i,p): \Tan(\Sigma_i,p) \to P
\]
is an isomorphism, then it preserves orientation.   
Let $S=\cup S_i$ and define
\begin{align*}
   &\gamma: S\to G, \\
   &\gamma(p)=g_i \quad\text{for $p\in S_i$}.
\end{align*}

Since $M$ has no points of multiplicity $g$, we can assume that $\gamma(p)$ is never
equal to $g$ or $-g$.

Since $\pi_\#M=g[\Omega]$, we see that 
\[
  \sum_{p \in S\cap \pi^{-1}x} \gamma(p) = g
\]
for almost every $x\in \Omega$, and therefore that
\[
    \sum_{p\in \pi^{-1}x} |\gamma(p)| \ge |g|+\eps
\]
since $\gamma(p)\ne  g$.

Thus
\begin{align*}
|M|
&=
\int_{S} |\gamma(p)|\, d\Hh^mp
\\
&\ge
\int_{x\in \Omega} \sum_{p\in S\cap \pi^{-1}x} |\gamma(p)| \, dx
\\
&\ge
\int_\Omega (|g|+\eps) \, d\Hh^m
\\
&\ge
(|g|+\eps)  \Hh^m(\Omega).
\end{align*}
\end{proof}

If $M$ is an $m$-chain of locally finite mass, and if $B(p,r)$ is a ball, we let
\[
  \Phi(M,p,r)
\]
be the minimum of 
\[
  \Ff_{B(0,1)}\left( \frac1r(M-p), g[P] \right)
\]
among all oriented  $m$-planes $P$ with $0\in P$.

We also let
\begin{align*}
&\Theta(M,p,r) = \frac{|M\llcorner B(p,r)|}{\omega_mr^m}, \\
&\Theta(M,p) = \lim_{r\to 0}\Theta(M,p,r) \quad\text{(if the limit exists), and} \\
&\Theta(M) = \sup_{p\in \RR^d, \,r>0}\Theta(M,p,r).
\end{align*}

\begin{lemma}\label{bernstein-lemma}
There is a $\delta>0$ with the following property.
Suppose $M$ is a mass-minimizing $m$-chain in $\RR^d$ such that 
\begin{enumerate}
\item\label{bernie-1} $\partial M=0$.
\item\label{bernie-2} $\Theta(M)\le |g|$.
\item $\Phi(M,0,r)\le \delta$ for all $r>1$.
\end{enumerate}
Then $M=g[P]$ for some oriented $m$-plane $P$.
\end{lemma}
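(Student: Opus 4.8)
The plan is to argue by contradiction, feeding the Allard Regularity Theorem with the mass control supplied by Theorem~\ref{convergence-theorem}, and then closing with the equality case of the monotonicity formula. So suppose no such $\delta$ exists: there are $\delta_i\downarrow 0$ and mass-minimizing $m$-chains $M_i$ satisfying (1), (2) and (3) with $\delta=\delta_i$, none of which is a multiplicity-$g$ plane. Since (2) bounds the masses $|M_i\llcorner B(0,R)|$ locally uniformly and $\partial M_i=0$, the compactness theorem for flat chains of locally finite mass (valid because $G$ satisfies~\eqref{compact}) together with Theorem~\ref{convergence-theorem} gives, after passing to a subsequence, a mass-minimizing $M_\infty$ with $\partial M_\infty=0$, $\Theta(M_\infty)\le|g|$, and $\mu_{M_i}\to\mu_{M_\infty}$. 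Since $\Ff_{B(0,1)}$ varies continuously along flat-convergent sequences and $P\mapsto g[P]$ is flat-continuous on the compact Grassmannian of oriented $m$-planes through the origin, $\Phi(M_\infty,0,r)=0$ for every $r>1$; hence $M_\infty\llcorner B(0,r)=g[P_r\cap B(0,r)]$ for some $m$-plane $P_r\ni 0$, and comparing two such balls on their common part shows $P_r$ is independent of $r$. Thus $M_\infty=g[P]$ for a single plane $P$, and in particular $\mu_{M_i}\to|g|\,\Hh^m\llcorner P$ locally.

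Next I would run an $\eps$-regularity argument at the fixed scale $r=2$. For $i$ large, $\mu_{M_i}(B(0,2))\to|g|\,\omega_m 2^m$ and $\spt M_i$ Hausdorff-converges to $P$ inside $B(0,2)$ (both because $\mu_{M_i}\to|g|\,\Hh^m\llcorner P$ and $\mu_{g[P]}(\partial B(0,2))=0$), $\Phi(M_i,0,2)\le\delta_i\to 0$, and the varifold of $M_i$ is stationary (it is mass-minimizing with no boundary). Passing to the rescaling $\hat M_i$ (the image of $M_i$ under $x\mapsto x/2$), one sees that for $i$ large $\hat M_i$ meets the hypotheses of the Allard Regularity Theorem in $B(0,1)$ with density unit $|g|$. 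Hence $\hat M_i\llcorner B(0,1/2)=g[\Sigma_i]$ for a nonempty $C^{1,\alpha}$ embedded $m$-manifold $\Sigma_i$, which (as $M_i$ is mass-minimizing) is in fact smooth. Undoing the rescaling, $M_i\llcorner B(0,1)$ equals $g$ times a smooth embedded $m$-manifold; fix a point $p_i$ in its support.

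To finish, note that $p_i$ is a regular point of $M_i$ of multiplicity $g$, so $\Theta(M_i,p_i)=|g|$. By the monotonicity formula (available since $M_i$ is mass-minimizing with $\partial M_i=0$), $r\mapsto\Theta(M_i,p_i,r)$ is nondecreasing, so $\Theta(M_i,p_i,r)\ge|g|$ for all $r>0$; together with $\Theta(M_i,p_i,r)\le\Theta(M_i)\le|g|$ from (2), this forces $\Theta(M_i,p_i,r)\equiv|g|$. By the equality case of the monotonicity formula, $M_i$ is a cone with vertex $p_i$; a cone that is a smooth embedded $m$-manifold near its vertex coincides with its tangent plane there, so $M_i=g'[P']$ for an $m$-plane $P'$ and some $g'\in G$, and flat closeness to $g[P]$ then forces $g'=g$. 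So $M_i$ is a multiplicity-$g$ plane, contrary to the choice of the $M_i$.

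I expect the delicate step to be the passage from the \emph{weak} hypothesis (3) to the \emph{mass} control that Allard's theorem requires: one must rule out that a minimizer flat-close to $g[P]$ develops a low-density ``hole.'' That is precisely what Theorem~\ref{convergence-theorem} provides here (flat convergence of minimizers upgrades to convergence of the mass measures); alternatively, the needed lower bound on $\mu_{M_i}(B(0,2))$ could be extracted from the projection estimate of Lemma~\ref{projection-lemma} together with the constancy theorem, which is where the strong triangle inequality would enter directly---it is, in any case, used quietly through~\eqref{discrete-2}, which makes every locally-finite-mass chain rectifiable. The remaining ingredients (flat compactness, Allard $\eps$-regularity, and the equality case of the monotonicity formula) are standard.
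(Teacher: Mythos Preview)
Your final step---finding a point $p_i$ with $\Theta(M_i,p_i)=|g|$ and using the equality case of monotonicity together with the bound $\Theta(M_i)\le|g|$ to force $M_i$ to be a multiplicity-$g$ plane---is exactly how the paper finishes. The problem is the step that produces $p_i$.

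You invoke Allard ``with density unit $|g|$'', but Allard's $\eps$-regularity requires the \emph{lower} density bound on the varifold to match that unit: one needs $\Theta(M_i,p)\ge|g|$ at $\mu_{M_i}$-a.e.\ point of the support, and then mass in the ball close to $|g|\,\omega_m r^m$. At this stage the only lower bound you have is $\Theta(M_i,p)\ge c:=\inf_{x\ne 0}|x|$, coming from~\eqref{discrete-2}. Nothing in your argument rules out $c<|g|$, and indeed this can happen for $g$ satisfying the strong triangle inequality: take $G=\ZZ_2\times\ZZ_2=\{0,a,b,a+b\}$ with $|a|=1$, $|b|=|a+b|=10$, and $g=b$; then $|a|+|a+b|=11>10=|g|$ so~\eqref{strong} holds, yet $c=1$ and $|g|=10$. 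In that situation Allard with unit $c$ is useless (the mass ratio is near $|g|/c$, far from $1$), and Allard with unit $|g|$ is unavailable because the pointwise lower bound $\ge|g|$ is precisely what remains to be proved. In fact the paper uses this lemma, via Corollary~\ref{bernstein-corollary}, to \emph{establish} the lower density bound $\Theta\ge|g|$ that feeds into Allard in Theorem~\ref{fundamental-2}; invoking Allard here would be circular.

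The missing idea is a substantive use of the strong triangle inequality, not merely of~\eqref{discrete-2}. The paper does this through Lemma~\ref{projection-lemma}: after slicing by a cylinder so that $\pi_\# M_i'=g[D]$ and using Theorem~\ref{convergence-theorem} to get $|M_i'|\to|g|\,\Hh^m(D)$, one observes that if $M_i'$ had \emph{no} point of multiplicity $g$, Lemma~\ref{projection-lemma} would force $|M_i'|\ge(|g|+\eps)\Hh^m(D)$, a contradiction for large $i$. This produces the point $p_i$ with $\Theta(M_i,p_i)=|g|$ without any appeal to Allard, and your closing monotonicity argument then goes through.
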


\begin{proof}
Suppose that $M_i$ is a sequence of $m$-chains
satisfying~\eqref{bernie-1} and~\eqref{bernie-2}, and such that
\[
  \delta_i:= \sup_{r>1} F(M_i,0,r) \to 0.
\]
It suffices to prove that $M_i$ is a $m$-plane of multiplicity $g$ for all
sufficiently large $i$.
By rotating the $M_i$, we can assume that
\begin{equation}\label{to-P}
  \Ff_{B(0,1)} (M_i - g[P]) \to 0
\end{equation}
where $P=\RR^m\times\RR^{N-m}$.

It follows that the $M_i$ converge weakly to $g[P]$.

(If this is not clear, note that if $R>1$, then, after passing to a subsequence,
there is a $P'$ such that $\Ff_{B(0,R)}(M_i - g[P']) \to 0$.
By~\eqref{to-P}, $P'=P$, and thus it was not necessary to pass to a subsequence.)

Let
\[  
  K[r] = \overline{B^m(0,r)\times B^{d-m}(0,r)}.
\]
That is, $K[r]= \{f\le r\}$, where 
\begin{align*}
&f:\RR^d\to \RR, \\
&f(x,y)=\max\{|x|, |y|\} \quad (x\in \RR^m, \, y\in \RR^{d-m}).
\end{align*}

After passing to a subsequence, we
can assume (by Theorem~\ref{flamingo-theorem}) that
\[
   M_i\llcorner \Omega(r) \to g[D_r]
\]
for almost every $r$, 
where $D_r=P\cap B(0,r)$.  
Fix such an $r$, and let $M_i'=M_i\llcorner K[r]$ and $D=D_r$.

Since $\spt(M_i)\to P$, if follows that
\[
   \spt(M_i') \subset  \overline{B(0,r)\times B(0,\eta_i)}
\]
where $\eta_i\to 0$.
For all sufficiently large $i$, $\eta_i<r$, and thus
\[
  \spt(\partial M_i') \subset \partial B^m(0,r)\times \overline{B(0,\eta_i)}.
\]
Consequently,
\[
  \pi_\# M_i = g_i[D]
\]
for some $g_i$.  Since $M_i\to g[D]$, we see
that $|g_i-g|\to 0$ and therefore (since $\inf_{x\in G,\,x\ne 0}|x|>0$)
  that $g_i=g$ for all sufficiently large $i$.

The $\lambda$-minimizing property implies (see Theorem~\ref{convergence-theorem}) that
\[
   |M_i'| \to |g[D]| =  |g|\Hh^m(D).
\]
Hence, by Lemma~\ref{projection-lemma}, for large $i$, $M_i'$ must have a point $p_i$
at which the tangent cone is a multiplicity $m$-plane.
Since $\Theta(M_i')\le |g|$, it follows from monotonicity
that $M_i'$ is a cone with vertex $p_i'$,
and therefore a multiplicity $g$ plane.
\end{proof}

\begin{corollary}\label{bernstein-corollary}
If $M$ is a $\lambda$-minimizing $m$-chain in $\RR^d$, if $p$ is a point
in $\spt(M)\setminus \spt(\partial M)$, and if
\[
   \sigma:=\limsup_{r\to 0} F(M,p,r) < \delta,
\]
then every tangent cone to $M$ at $p$ is a multiplicity-$g$ plane.
\end{corollary}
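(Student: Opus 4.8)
The plan is to deduce the corollary from the Bernstein-type Lemma~\ref{bernstein-lemma}, applied to a tangent cone. Fix a tangent cone $C$ to $M$ at $p$: choose $r_j\downarrow 0$ so that the rescalings $M_j:=\frac{1}{r_j}(M-p)$ converge weakly to $C$. (Subsequential convergence is available because, by the monotonicity formula for $\lambda$-minimizing chains and $p\notin\spt(\partial M)$, the ratios $\Theta(M,p,r)$ stay bounded as $r\to 0$.) Each $M_j$ is $(\lambda r_j)$-minimizing and $\lambda r_j\to 0$, so Theorem~\ref{convergence-theorem} shows that $C$ is mass-minimizing and that $\mu_{M_j}\to\mu_C$. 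Since $p\notin\spt(\partial M)$, there is $\rho_0>0$ with $\spt(\partial M)\cap B(p,\rho_0)=\emptyset$; then $\spt(\partial M_j)$ misses $B(0,\rho_0/r_j)$ and $\rho_0/r_j\to\infty$, so $\partial C=0$. Finally, $\mu_{M_j}\to\mu_C$ together with $\Theta(M_j,0,\rho)=\Theta(M,p,r_j\rho)\to\Theta(M,p)$ (the limit exists by monotonicity) gives $\Theta(C,0,\rho)\equiv\Theta(M,p)$; a mass-minimizing chain with constant density ratio is a cone with vertex at $0$, whose density ratio is largest at the vertex, so $C$ is a cone and $\Theta(C)=\Theta(C,0)=\Theta(M,p)$. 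Thus hypothesis (1) of Lemma~\ref{bernstein-lemma} holds for $C$.

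For each fixed $\rho>0$, $\Phi(M_j,0,\rho)=\Phi(M,p,r_j\rho)$ with $r_j\rho\to 0$, so $\limsup_j\Phi(M_j,0,\rho)\le\limsup_{t\to 0}\Phi(M,p,t)=\sigma$. Since $\Ff_{B(0,1)}$ obeys the triangle inequality and the set of oriented $m$-planes through $0$ is compact, $\Phi(\,\cdot\,,0,\rho)$ is lower semicontinuous under weak convergence, so $\Phi(C,0,\rho)\le\sigma$ for every $\rho>0$; as $C$ is a cone this is simply $\Phi(C,0,\rho)=\Phi(C,0,1)\le\sigma$. Since $\sigma<\delta$, hypothesis (3) of Lemma~\ref{bernstein-lemma} holds for $C$.

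The remaining point, which I expect to be the main obstacle, is hypothesis (2): $\Theta(C)\le|g|$, equivalently $\Theta(M,p)\le|g|$. The one-sided estimate $\Theta(M,p)\le|g|+O(\sigma)$ is routine: for small $r$ one has $\Phi(M,p,2r)<\delta$, so slicing the comparison of $\frac{1}{2r}(M-p)$ with a nearest plane $g[P_0]$ and using Chebyshev's inequality to pick a good radius $s\in(r,2r)$ yields, after filling the slice discrepancy and coning the leftover small cycle off from $p$, a competitor for $M\llcorner B(p,s)$ of mass $\le|g|\,\omega_m s^m+O(\Phi(M,p,2r))\,r^m$; minimality, $r\to 0$, and the monotonicity formula then give the claim. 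Upgrading this to the sharp inequality $\Theta(M,p)\le|g|$ is where the strong triangle inequality must enter, through Lemma~\ref{projection-lemma} in the spirit of the proof of Lemma~\ref{bernstein-lemma}: a minimizing chain whose orthogonal projection onto an $m$-plane has multiplicity $g$ but which has no multiplicity-$g$ point carries strictly more mass --- by a definite amount involving $\eps$ --- than the projected disk, and this should force the density ratio of the cone $C$ down from $|g|+O(\sigma)$ to $|g|$ once $\delta$ is chosen small enough. A cleaner alternative at this step is to bypass Lemma~\ref{bernstein-lemma} altogether: $C$ is a mass-minimizing cone with $\partial C=0$, hence its associated varifold is stationary, has density ratio within $O(\delta)$ of $|g|$, and, by the comparison estimate, is measure-theoretically $O(\delta)$-close to $g[P_0]$; the Allard Regularity Theorem then represents $C$, in a smaller ball, as a $C^{1,\alpha}$ graph of multiplicity $g$ over $P_0$, and a cone that is $C^1$ at its vertex is a plane.

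With hypotheses (1)--(3) verified, Lemma~\ref{bernstein-lemma} gives $C=g[P]$ for some oriented $m$-plane $P$. Since $C$ was an arbitrary tangent cone to $M$ at $p$, every tangent cone to $M$ at $p$ is a multiplicity-$g$ plane.
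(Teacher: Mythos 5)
Your overall strategy is the paper's: pass to a tangent cone $C$ at $p$ and apply Lemma~\ref{bernstein-lemma} to it. In fact the paper's own proof is two sentences: it observes only that $F(C,0,r)\le\delta$ for all $r$ (your verification of hypothesis (3), done essentially as you do it) and then invokes the lemma. Your additional checks --- that $\partial C=0$, that $C$ is mass-minimizing, that $C$ is a cone with $\Theta(C)=\Theta(C,0)=\Theta(M,p)$, and the lower semicontinuity of $\Phi(\cdot,0,\rho)$ under weak convergence --- are all correct and are more explicit than what the paper writes.

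The step at which your write-up does not close is exactly the one you flag: hypothesis (2), $\Theta(C)\le|g|$. Neither of your sketches proves it. The filling/slicing comparison gives only $\Theta(C)\le|g|+O(\delta)$, and Lemma~\ref{projection-lemma} then yields a point $q$ of $C$ whose tangent cone is a multiplicity-$g$ plane; but monotonicity at $q$, together with $\Theta(C,q,R)\to\Theta(C,0)$ as $R\to\infty$ for a cone, gives $|g|=\Theta(C,q)\le\Theta(C,0)$ --- a \emph{lower} bound on the vertex density, not the upper bound you need --- so ``this should force the density ratio down to $|g|$'' is not an argument. The Allard alternative also fails for general $G$: Allard's hypotheses require the mass ratio to exceed $\omega_m$ times the essential lower bound of the pointwise density by only a small amount, and that lower bound is $\inf_{x\ne 0}|x|$, which may be far smaller than $|g|$; a mass ratio near $|g|\omega_m$ is then not ``small excess'' for the varifold of $C$. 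What actually happens in the paper is that hypothesis (2) is never derived from the corollary's stated hypotheses at all: in the one place the corollary is used (the proof of Theorem~\ref{fundamental-2}), the contradiction hypothesis $\Theta(M_i,p_i)<|g|$ is in force, and since $\Theta(C)=\Theta(M,p)$ for a tangent cone, hypothesis (2) is supplied by the context. So the efficient repair is not to prove $\Theta(C)\le|g|$ from flat closeness alone --- that is essentially a piece of the main theorem --- but to carry $\Theta(M,p)\le|g|$ as an additional (and, in the application, freely available) hypothesis of the corollary and pass it to the tangent cone via $\Theta(C)=\Theta(M,p)$.
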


\begin{proof}
Let $M'$ be a tangent cone to $M$ at $p$.  Then $F(M',0,r)\le \delta$
for all $r$, so $M'$ is a multiplicity-$g$ plane by Lemma~\ref{bernstein-lemma}.
\end{proof}

\begin{theorem}\label{fundamental-2}
Suppose that $G$ is a complete normed abelian group satisfying the compactness
property~\eqref{compact}, and  that $g\in G$ satisfies
  the strong triangle inequality~\eqref{strong-2}.
Suppose that $U$ is an open subset of $\RR^d$
and that $M_i$ is a sequence of $\lambda$-minimizing $m$-chains in $\RR^d$ with 
$\spt(\partial M_i)\subset U^c$.
  Suppose also that  $M_i\llcorner U$ converges
to $g[\Sigma]$, where $\Sigma$ is a properly $C^1$-embedded oriented $m$-manifold in 
  $U$.
Then there exists an exhaustion $W_1\subset W_2\subset \dots$ of $U$
by open subsets and $C^{1,\alpha}$ properly embedded, oriented $m$-manifolds $\Sigma_i$ in $W_i$
such that 
\[
     M_i\llcorner W_i =  g[\Sigma_i]
\]
and such that the $\Sigma_i$ converge in $C^{1,\alpha}$ with multiplicity $1$ to $\Sigma$.
\end{theorem}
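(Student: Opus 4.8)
The plan is to deduce Theorem~\ref{fundamental-2} from the Allard Regularity Theorem together with Lemma~\ref{bernstein-lemma}, Corollary~\ref{bernstein-corollary}, Lemma~\ref{projection-lemma}, and Theorem~\ref{convergence-theorem}, in four steps. First, by Theorem~\ref{convergence-theorem}, $g[\Sigma]$ is $\lambda$-minimizing and $\mu_{M_i}\to\mu_{g[\Sigma]}$. Since $\mu_{g[\Sigma]}=|g|\,\Hh^m\llcorner\Sigma$, it follows that $\spt M_i\cap K\to\Sigma\cap K$ in the Hausdorff metric for every compact $K\subset U$; in particular, for $i$ large, $\spt M_i\cap K$ lies within $\eta_i$ of $\Sigma$ with $\eta_i\to 0$, and $\spt(\partial M_i)$ is disjoint from any fixed neighbourhood of $\Sigma$. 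The varifold $V_i$ associated to $M_i$ has generalized mean curvature bounded by $\lambda$ (by the theorem on mean curvature of $\lambda$-minimizing chains proved above), hence satisfies the monotonicity formula, so $\Theta(M_i,q)$ is defined for every $q\in\spt M_i$. Combining monotonicity, $\mu_{M_i}\to\mu_{g[\Sigma]}$, and the $C^1$-flatness of $\Sigma$, one gets, for every compact $K\subset U$ and every $\delta'>0$, a $\rho_0=\rho_0(K,\delta')>0$ and an $i_0$ with
\[
  \Theta(M_i,q,\rho)\le|g|\,(1+\delta')\qquad\text{whenever }i\ge i_0,\ \dist(q,\Sigma)\le\rho_0,\ \rho\le\rho_0 .
\]

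The heart of the matter is the second step. Let $\delta>0$ be the constant of Lemma~\ref{bernstein-lemma} and Corollary~\ref{bernstein-corollary} (chosen so small that the zero chain violates hypothesis~(3) of Lemma~\ref{bernstein-lemma}), and I claim that for every compact $K\subset U$ and all $i$ large, every $q\in\spt M_i\cap K$ satisfies $\limsup_{\rho\to0}\Phi(M_i,q,\rho)<\delta$; by Corollary~\ref{bernstein-corollary} this forces every tangent cone of $M_i$ at $q$ to be a multiplicity-$g$ plane, so $\Theta(M_i,q)=|g|$. Suppose the claim fails: there are $i_j\to\infty$ and $q_j\in\spt M_{i_j}\cap K$ with $q_j\to p\in\Sigma$ and $\limsup_{\rho\to0}\Phi(M_{i_j},q_j,\rho)\ge\delta$. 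Fix $\rho_0$ small enough that $\Phi(g[\Sigma],p',\rho_0)<\delta/4$ for every $p'\in\Sigma$ near $K$; then $\Phi(M_{i_j},q_j,\rho_0)<\delta/4$ for $j$ large, while arbitrarily small scales $\rho$ have $\Phi(M_{i_j},q_j,\rho)>\delta/2$. Since $\rho\mapsto\Phi(M_{i_j},q_j,\rho)$ is continuous, let $s_j\in(0,\rho_0)$ be the largest scale with $\Phi(M_{i_j},q_j,s_j)=\delta/2$; then $\Phi(M_{i_j},q_j,\rho)<\delta/2$ for $\rho\in(s_j,\rho_0]$, and a compactness argument (using $M_{i_j}\to g[\Sigma]$) shows $s_j\to0$. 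Let $N_j$ be the image of $M_{i_j}$ under $x\mapsto(x-q_j)/s_j$. Then $N_j$ is $(\lambda s_j)$-minimizing, $\partial N_j=0$ on every fixed ball for $j$ large, the densities $\Theta(N_j,\cdot,\cdot)$ are locally bounded (by the displayed estimate), $\Phi(N_j,0,1)=\delta/2$, and $\Phi(N_j,0,r)<\delta/2$ for $r\in(1,\rho_0/s_j]$. By the compactness property~\eqref{compact} and Theorem~\ref{convergence-theorem} (applied with $\lambda s_j\to0$), a subsequence converges to a mass-minimizing $N$ with $\partial N=0$, $0\in\spt N$, $\Theta(N)\le|g|$ (from the displayed estimate, as $q_j+s_jz\to p$ for each fixed $z$), $\Phi(N,0,1)=\delta/2$, and $\Phi(N,0,r)\le\delta/2\le\delta$ for all $r>1$ (since $\rho_0/s_j\to\infty$). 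Now Lemma~\ref{bernstein-lemma} applies to $N$ and gives $N=g[P]$ for an $m$-plane $P$, so $\Phi(N,0,1)=0$, contradicting $\Phi(N,0,1)=\delta/2$. This proves the claim; note it is exactly here that the strong triangle inequality is used, through Lemma~\ref{bernstein-lemma}, which rests on Lemma~\ref{projection-lemma} and its gap $|g|+\eps$.

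Now fix a compact $K\subset U$. For $i$ large and $q\in\spt M_i\cap K$ we have $\Theta(M_i,q)=|g|$ (second step) and $\Theta(M_i,q,\rho)\le|g|(1+\delta')$ for $\rho\le\rho_0$ (first step); dividing the varifold $V_i$ by $|g|$ (which does not change its mean curvature), the hypotheses of the Allard Regularity Theorem hold at $q$ — density $1$ at the centre, density ratio $<1+\delta_{\mathrm{Allard}}$ in small balls, mean curvature $\le\lambda$ — so $\spt M_i$ is, near every point of $\spt M_i\cap K$, a $C^{1,\alpha}$-embedded oriented $m$-manifold on which $\mu_{M_i}$ has density $|g|$, with quantitative $C^{1,\alpha}$ estimates that tend to $0$ as $i\to\infty$ (since the density ratio tends to $|g|$ and the flat distance to the tangent plane of $\Sigma$ tends to $0$). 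On such a connected piece $\Gamma$, the chain $M_i$ has no boundary and is carried by $\Gamma$ with coefficient of constant norm $|g|$, so by the constancy theorem for flat chains its coefficient is a constant $a\in G$ with $|a|=|g|$. To identify $a$ with $\pm g$: near $p\in\Sigma$, in coordinates in which $\Sigma$ is a small-gradient graph over $P:=\Tan(\Sigma,p)$, for a thin solid cylinder $Z$ over a disk in $P$ the chain $M_i\llcorner Z$ has compact support, stays away from the top and bottom of $Z$, and has no boundary there, so $\pi_\#(M_i\llcorner Z)$ (with $\pi$ orthogonal projection onto $P$) is a flat chain in the disk with boundary carried by its rim; by the constancy theorem it equals $c_i[\text{disk}]$, and since $M_i\to g[\Sigma]$ and $G$ satisfies~\eqref{discrete-2}, $c_i=g$ for $i$ large. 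Comparing with $a\,[\,\cdot\,]$ forces $a=\pm g$, and orienting $\Gamma$ accordingly gives $M_i\llcorner\Gamma=g[\Gamma]$.

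Finally, exhaust $U$ by open sets $W_1\subset W_2\subset\cdots$ with each $\overline{W_k}$ a compact subset of $U$. For each $k$, applying the preceding step with $K=\overline{W_k}$ and using $\spt M_i\cap\overline{W_k}\to\Sigma\cap\overline{W_k}$, there is $i(k)$ so that for $i\ge i(k)$ the $C^{1,\alpha}$ pieces over a finite subcover of $\Sigma\cap\overline{W_k}$ glue to a properly $C^{1,\alpha}$-embedded oriented $m$-manifold $\spt M_i\cap W_k$ with $M_i\llcorner W_k=g[\spt M_i\cap W_k]$. Put $W_i:=W_{k(i)}$ with $k(i):=\max\{k:i(k)\le i\}$ (so $k(i)\to\infty$ and, after monotonizing, $W_i\subset W_{i+1}$) and $\Sigma_i:=\spt M_i\cap W_i$; then $M_i\llcorner W_i=g[\Sigma_i]$, and the Allard estimates give $\Sigma_i\to\Sigma$ in $C^{1,\alpha}_{\mathrm{loc}}$ with multiplicity $1$. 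The main obstacle is the second step — proving, uniformly for large $i$, that the tangent cones of $M_i$ at points near $\Sigma$ are multiplicity-$g$ planes; once that is in hand, the application of Allard, the identification of the coefficient, and the patching are comparatively routine.
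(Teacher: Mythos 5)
Your proof is correct and follows essentially the same route as the paper: a point-picking/blow-up argument at the largest ``bad'' scale, using Lemma~\ref{bernstein-lemma} and Corollary~\ref{bernstein-corollary} to force the blow-up limit to be a multiplicity-$g$ plane and thereby contradict the normalization $\Phi=\delta/2$ at scale $1$, which yields the density lower bound $\Theta(M_i,q)=|g|$ needed to invoke the Allard Regularity Theorem. You supply more detail than the paper on the steps it leaves implicit (the upper density-ratio bound from monotonicity, the identification of the coefficient via the constancy theorem, and the exhaustion/patching), but the underlying argument is the same.
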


\begin{proof}
By the Allard Regularity Theorem (\cite{allard} or~\cite{short-allard}), it suffices to show that if $W\subset\subset U$,
then for all sufficiently large $i$,
\[
 \Theta(M,p)\ge |g| \quad\text{for every $p\in W\cap\spt M_i$}.
 \]
Suppose not.  Then (after passing to a subsequence), there exist
$p_i\in W\cap \spt M_i$ for which
\[
  \Theta(M_i,p_i) < |g|.
\]
By passing to a further subsequence, we can assume that the $p_i$
converge to a point $p\in\overline{W}$.

Choose $R_i$ converging to $0$ sufficiently slowly
that 
\[
   \frac1{R_i}(M_i - p_i)  \to g[\Tan(\Sigma,p)].
\]
Hence if 
\[
  0 < \liminf \frac{\rho_i}R_i \le \limsup \frac{\rho_i}R_i < \infty,
\]
then
\[
   \frac1{\rho_i}(M_i - p_i) \to g[\Tan(\Sigma(p)],
\]
and therefore
\begin{equation}\label{rho}
  \Phi(M_i,p_i,\rho_i) \to 0.
\end{equation}

Let $\delta$ be as in Lemma~\ref{bernstein-lemma}.
Let 
\[
  M_i' := \frac1{r_i} (M_i- p_i),
\]
where $r_i$ is the supremum of $r\in (0,R_i]$ such that
\[
   F(M_i,p_i,r) \ge \delta.
\]
Note that, by Corollary~\ref{bernstein-corollary}, there exists such an $r$, so $r_i>0$.
By~\eqref{rho}, 
\[
  \frac{r_i}R_i \to 0.
\]

Let $M'$ be a subsequential limit of $M_i'$.
Then
\[
    F(M',0,r) \le \delta
\]
for all $r\ge 1$, so $M'$ is a multiplicity $g$
plane through the origin (by Lemma~\ref{bernstein-lemma}),
and therefore $F(M',0,r)=0$ for all $r>0$.
But $F(M',0,1)\ge \delta$ by choice of $r_i$,
a contradiction.
\end{proof}

\section{Examples}\label{examples-section}

Here we show that the hypothesis
\begin{equation}\label{hypothesis-end}
  |g| <  \inf_{a,b\in G\setminus\{0\}, \,a+b=g} (|a|+|b|)
\end{equation}
is necessary in Theorem~\ref{fundamental-2}.

Recall that~\eqref{hypothesis-end} is equivalent to
\begin{enumerate}
\item $\inf_{x\in G\setminus \{0\}}|x|>0$, and
\item $|g|< |a|+|b|$ for all $a,b\in G\setminus \{0\}$ such that $a+b=g$.
\end{enumerate}

Let $U=B^{m}(0,1)\times\RR$
and let $\Sigma$ be the unit $m$-ball $\BB^{m}(0,1)\times\{0\}$ with the standard orientation.

If $\inf_{x\in G\setminus\{0\}}|x|=0$,
choose $a_n\in G\setminus\{0\}$ with $a_n\to 0$.
Then $(g+a_n)[\Sigma]$ is area-minimizing and converges as $n\to\infty$ to $g[\Sigma]$,
violating the conclusion of Theorem~\ref{fundamental-2}.

Now suppose that there exist $a,b\in G\setminus\{0\}$ such that 
\begin{align*}
  g &= a+b, \\
  |g| &=|a| + |b|.
\end{align*}
Then
\[
   M_n:= a[\Sigma] + b[\Sigma+ (1/n)\ee_{m+1}]
\]
is mass-minimizing and converges to $g[P]$ as $n\to\infty$,
violating the conclusion of Theorem~\ref{fundamental-2}.

Here, $M_n$ is mass-minimizing according to the following lemma:

\begin{lemma}
Suppose that $a,b\in G$ and that $|a+b|=|a|+|b|$.
Suppose also that $\Omega$ is an open subset of an oriented $m$-plane $P$, and let
\[
   M = a[\Omega] + b[\Omega+u],
\]
where $u$ is a vector perpendicular to $P$.
Then $M$ is mass-minimizing.
\end{lemma}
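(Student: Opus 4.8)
The plan is to reduce the problem to a projection/lower-semicontinuity argument, following the same philosophy as Lemma~\ref{projection-lemma}. Let $N$ be any compactly supported competitor with $N - M$ a boundary, say $N = M + \partial Q$ for some compactly supported $(m+1)$-chain $Q$; I must show $|N| \ge |M|$. Both $M$ and $N$ have the same projection under the orthogonal projection $\pi$ onto $P$ away from a compact set, so in particular $\pi_\# N = \pi_\# M$ on the relevant Borel set, and on $\Omega \cup (\Omega + u)$ we have $\pi_\# M = a[\Omega] + b[\Omega] = (a+b)[\Omega]$ where I am identifying $\Omega + u$ with $\Omega$ via $\pi$ (since $u \perp P$).

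The key estimate is that for a rectifiable chain $N$ with $\pi_\# N = (a+b)[\Omega]$, one has $|N| \ge |a+b|\,\Hh^m(\Omega) = (|a|+|b|)\,\Hh^m(\Omega) = |M|$. This is exactly the mechanism of Lemma~\ref{projection-lemma} but \emph{without} excluding multiplicity-$(a+b)$ points: writing $N = \sum_i g_i[S_i]$ rectifiably, reorienting so that $\pi$ restricted to tangent planes is orientation-preserving where it is an isomorphism, and setting $\gamma(p) = g_i$ on $S_i$, one gets $\sum_{p \in \pi^{-1}x} \gamma(p) = a+b$ for a.e.\ $x \in \Omega$, hence $\sum_{p \in \pi^{-1}x} |\gamma(p)| \ge |a+b|$ by the ordinary triangle inequality. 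Then the same chain of inequalities as in Lemma~\ref{projection-lemma} (co-area, Fubini) gives $|N| \ge |a+b|\,\Hh^m(\Omega)$. Since $|M| = |a|\,\Hh^m(\Omega) + |b|\,\Hh^m(\Omega) = |a+b|\,\Hh^m(\Omega)$ by hypothesis, this yields $|N| \ge |M|$, so $M$ is mass-minimizing.

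The main technical obstacle is making the projection argument valid when $N$ is not assumed to have nice structure: $N$ is rectifiable by~\eqref{discrete-2} (since $a \ne 0$, $b \ne 0$ forces $\inf_{x \ne 0}|x| > 0$ here, though more care is needed if the hypothesis of the lemma does not by itself give discreteness — one should note the lemma is applied in the examples where $g = a+b$ satisfies the relevant positivity, or simply restrict to rectifiable competitors, which suffices since $\pi_\#$ of any finite-mass boundary modification is still controlled). One must also be careful that the portions of $N$ outside a large ball agree with $M$ so that the projection identity $\pi_\# N = (a+b)[\Omega]$ genuinely holds on $\Omega$; this follows because $\pi_\#(\partial Q) = \partial(\pi_\# Q)$ and $Q$ is compactly supported, so the boundary modification does not change $\pi_\# N$ as a chain on $P$ outside a compact set, and the multiplicity over $\Omega$ is pinned by the behavior near the boundary of the competition region exactly as in the proof of Lemma~\ref{bernstein-lemma}. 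Once these bookkeeping points are handled, the inequality is immediate from the ordinary triangle inequality in $G$, which is all the hypothesis $|a+b| = |a|+|b|$ is needed to match.
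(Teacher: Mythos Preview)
Your overall strategy---project onto $P$, identify $\pi_\# N = (a+b)[\Omega]$, and compare masses---is exactly the paper's. But you take an unnecessarily heavy route to the key inequality $|N|\ge |a+b|\,\Hh^m(\Omega)$: you invoke a rectifiable decomposition of $N$ and a fiberwise triangle-inequality argument in the style of Lemma~\ref{projection-lemma}. The paper instead uses the one-line fact that orthogonal projection is $1$-Lipschitz, so
\[
  |N| \ge |\pi_\# N| = |(a+b)[\Omega]| = |a+b|\,\Hh^m(\Omega),
\]
and the rest is just the hypothesis $|a+b|=|a|+|b|$.

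This is not merely a matter of taste. Your detour creates the very gap you flag: you need $N$ rectifiable, hence need~\eqref{discrete-2}, but the lemma lives in \S\ref{examples-section}, where one is \emph{not} assuming discreteness of $G$ (indeed the point of that section is to treat groups for which the strong triangle inequality fails). Your suggested fixes do not close the gap: the condition $|a+b|=|a|+|b|$ with $a,b\ne 0$ says nothing about $\inf_{x\ne 0}|x|$, and ``restrict to rectifiable competitors'' does not establish minimality among all competitors. The Lipschitz-pushforward bound $|N|\ge |\pi_\# N|$ holds for arbitrary finite-mass flat chains and sidesteps all of this. Also note that $\pi_\# N=\pi_\# M$ holds on the nose (not just ``away from a compact set''): $\pi_\#(\partial Q)=\partial(\pi_\# Q)=0$ because $\pi_\# Q$ is an $(m+1)$-chain supported in an $m$-plane.
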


\begin{proof}
Let $M'$ be a compactly supported chain with $\partial M'=\partial M$.
Let $\pi$ denote orthogonal projection onto $P$.
Then
\[
    \pi_\# (M') = (a+b)[\Omega],
\]
so
\begin{align*}
|M'|
&\ge
|\pi_\# M'|  
\\
&=  |a+b| \Hh^m(\Omega) 
\\
&= (|a| + |b|)\Hh^m(\Omega)
\\
&= (|a|)\Hh^m(\Omega) + |b|\Hh^m(\Omega+ u)
\\
&=  |M|.
\end{align*}
\end{proof}

\section{appendix}

\begin{lemma}
Suppose that $M$ is a finite mass $m$-chain in $\RR^d$,
that $f:\RR^d\to \RR$ is a Lipschitz function with Lipschitz constant $\le 1$.
For $s\in \RR$, let 
\begin{align*}
K[s] &= \{f\le s\}, \\
M[s] &= M\llcorner K[s].
\end{align*}
If $W\subset \RR^d$ is an open set that contains $\{f \le b\}$, then
\begin{equation}\label{coarea-inequality}
 \int_{s=a}^b \Ff(M[s]; K[s])\,ds \le (1+(b-a)) \Ff(M).
\end{equation}
\end{lemma}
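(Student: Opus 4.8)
The plan is to fix an $(m+1)$-chain $Q$ that nearly realizes $\Ff(M)$, meaning $|M - \partial Q| + |Q|$ is within an arbitrary $\tau > 0$ of $\Ff(M)$, and then to ``slice'' both $Q$ and $M - \partial Q$ by the level sets $\{f \le s\}$. For each $s$ one obtains a candidate competitor for $\Ff(M[s]; K[s])$ out of the sliced pieces, and integrating the resulting pointwise bound in $s$ over $[a,b]$ will produce~\eqref{coarea-inequality}. The key algebraic identity to exploit is the boundary formula for a chain restricted to a sublevel set: writing $R = M - \partial Q$, one has, for a.e. $s$,
\[
  \partial(Q \llcorner K[s]) = (\partial Q)\llcorner K[s] + \langle Q, f, s\rangle,
\]
where $\langle Q, f, s\rangle$ is the slice of $Q$ by $f$ at level $s$, an $m$-chain supported in $\{f = s\} \subset K[s]$. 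Hence
\[
  M[s] - \partial(Q\llcorner K[s]) = R\llcorner K[s] - \langle Q, f, s\rangle,
\]
and since $Q\llcorner K[s]$ is supported in $K[s]$, this exhibits $Q \llcorner K[s]$ as an admissible competitor for $\Ff(M[s]; K[s])$ once we note $\langle Q, f, s\rangle$ is also supported in $K[s]$. (One should be slightly careful that $K[s] \subset W$ and $R\llcorner K[s]$ makes sense; this is why $W$ is assumed to contain $\{f\le b\}$, which contains every $K[s]$ for $s \le b$.) Therefore
\[
  \Ff(M[s]; K[s]) \le |R\llcorner K[s]| + |\langle Q, f, s\rangle| + |Q \llcorner K[s]|.
\]

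Now I would integrate this bound over $s \in [a,b]$. The first and third terms are handled by $|R\llcorner K[s]| \le |R|$ and $|Q\llcorner K[s]| \le |Q|$ (monotonicity of mass under restriction), each contributing at most $(b-a)|R|$ and $(b-a)|Q|$ respectively after integration. The middle term is handled by the co-area/slicing inequality for flat chains, which gives
\[
  \int_{s=a}^{b} |\langle Q, f, s\rangle|\, ds \le |Q|,
\]
using that $\mathrm{Lip}(f) \le 1$; this is the standard mass estimate for slices (the total mass of all slices of $Q$ by $f$ is bounded by $\mathrm{Lip}(f)\cdot|Q| \le |Q|$). Combining,
\[
  \int_{s=a}^{b} \Ff(M[s]; K[s])\, ds \le (b-a)|R| + |Q| + (b-a)|Q| \le (1 + (b-a))\big(|M-\partial Q| + |Q|\big).
\]
Letting $\tau \to 0$ replaces the right side by $(1+(b-a))\Ff(M)$, which is the claim. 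Finally, if $\Ff(M) = \infty$ there is nothing to prove, and if $M$ has finite mass we may also take $Q = 0$ to see the left side is finite, so no integrability pathology arises.

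The main obstacle I anticipate is purely technical bookkeeping around the slicing machinery in the flat-chains-with-coefficients setting: one needs that the slice $\langle Q, f, s\rangle$ exists for a.e.\ $s$, has the stated boundary relation with $Q \llcorner K[s]$, and satisfies the total-mass bound $\int |\langle Q, f, s\rangle|\,ds \le \mathrm{Lip}(f)|Q|$. These are standard for integral flat chains and extend to normed-group coefficients (see Fleming, or the slicing results in the references cited in \S2), but one should either quote them precisely or verify that the restriction $Q \llcorner K[s]$ and its boundary are well-defined $m$-chains for a.e.\ $s$ — this is where the hypothesis $\mathrm{Lip}(f)\le 1$ and the finite mass of $M$ (hence of any near-optimal $Q$, which may be taken of finite mass since $\Ff(M)<\infty$) are used. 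Everything else is elementary.
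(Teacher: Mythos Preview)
Your proof is correct and follows essentially the same route as the paper: choose a near-optimal $Q$, use $Q\llcorner K[s]$ as the competitor for $\Ff(M[s];K[s])$, and control the defect term via the slicing mass inequality $\int |\langle Q,f,s\rangle|\,ds\le |Q|$. The paper phrases the slice as $(\partial Q)[s]-\partial(Q[s])$ and cites Fleming's Theorem~5.7 for the integral bound, but this is exactly your $\langle Q,f,s\rangle$; the only point worth adding explicitly is that $|\partial Q|<\infty$ (since $|M|<\infty$ and $|M-\partial Q|<\infty$), which is what makes the slicing formula and the restriction $(\partial Q)\llcorner K[s]$ legitimate.
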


\begin{proof}
If $A$ is a chain with $|A|<\infty$ and if $s\le b$, we let
\[
   A[s]:=  A \llcorner \{f\le s\} = A\llcorner K[s].
\]

Let $Q$ be an $(m+1)$-chain such that
\[
   |M- \partial Q| + |Q| < \infty.
\]
Since $|M|<\infty$, it follows that $|\partial Q|<\infty$.
Note that
\begin{equation}\label{long}
\begin{aligned}
\Ff(M[s]; K[s])
&\le
|M[s] -\partial (Q[s])| + |Q[s]|  \\
&=
|M[s] - (\partial Q)[s] + |Q(s)| + |(\partial Q)[s] -   \partial (Q[s])| \\
&=
|(M - \partial Q)[s]| + |Q(s)| + |(\partial Q)[s] -   \partial (Q[s])| \\
&=
|(M-\partial Q| + |Q| +  |(\partial Q)[s] -   \partial (Q[s])|  \\
\end{aligned}
\end{equation}
Also
\begin{align*}
\int_a^b | (\partial Q)[s] - \partial (Q[s])|\,ds
&\le
|Q|
\end{align*}
by~\cite{fleming}*{Theorem~5.7}.
Thus integrating~\eqref{long} gives
\begin{align*}
 \int_a^b \Ff(M[s], K[s]) \,ds 
&\le 
(1+b-a) (|M-\partial Q| + |Q|).
\end{align*}
Taking the infimum over $(m+1)$-chains $Q$ gives~\eqref{coarea-inequality}.
\end{proof}

\begin{theorem}\label{flamingo-theorem}
Suppose that $f:\RR^d\to\RR$ is Lipschitz with Lipschitz constant $\le 1$
and that $K[s]:=\{f\le s\}$ is compact for every $s<\infty$.
If $T_i$ are $m$-chains of locally finite mass such that $T_i\to 0$,
then (after passing to a subsequence)
\begin{equation}\label{Ti-to-zero}
   \Ff(T_i \llcorner K[s]; K[s]) \to 0
\end{equation}
for almost every $s\ge 0$.
\end{theorem}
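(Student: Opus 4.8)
The plan is to mimic the coarea inequality of the preceding lemma, but applied to the competitor chain rather than to $T_i$ itself --- which is the whole point, since $T_i$ need not have finite mass. First I would reduce the statement to a bounded range of levels: it suffices to prove that for every integer $k\ge 1$,
\[
  \int_0^k \Ff\big(T_i\llcorner K[s]\,;\,K[s]\big)\,ds \longrightarrow 0
\]
along the given sequence (the integrand is measurable in $s$, as implicitly used in the preceding lemma). Granting this, I extract a subsequence along which $\Ff(T_i\llcorner K[s];K[s])\to 0$ for a.e.\ $s\in(0,1)$, then a further subsequence for a.e.\ $s\in(0,2)$, and so on; a diagonal subsequence then yields \eqref{Ti-to-zero} for a.e.\ $s\ge 0$.

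Now fix $k$. Since $K[k+1]=\{f\le k+1\}$ is compact, $U:=\{f<k+1\}$ is a bounded open set with $K[k]\subset U$ and $\overline U\subset K[k+1]$. From $T_i\to 0$ we get $\Ff_U(T_i)\to 0$, so I may choose finite-mass $(m+1)$-chains $Q_i$ with
\[
  \eps_i := \big|(T_i-\partial Q_i)\llcorner U\big| + \big|Q_i\llcorner U\big| \longrightarrow 0 .
\]
The total mass $|Q_i|$ is uncontrolled, so the next step is to cut $Q_i$ down: choose a level $c_i\in(k,k+1)$ generic enough that the slice $(\partial Q_i)\llcorner K[c_i]-\partial\big(Q_i\llcorner K[c_i]\big)$ has finite mass and is supported in $\{f=c_i\}$ (true for a.e.\ $c_i$), and set $\tilde Q_i := Q_i\llcorner K[c_i]$. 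Then $\tilde Q_i$ is a finite-mass chain supported in the compact set $K[c_i]\subset K[k+1]$, with $|\tilde Q_i|=\mu_{Q_i}(K[c_i])\le \mu_{Q_i}(U)\le\eps_i$ (using $K[c_i]\subset U$); moreover $\tilde Q_i$ coincides with $Q_i$ on the open set $\{f<c_i\}\supset K[k]$, so $\partial\tilde Q_i$ coincides with $\partial Q_i$ there, i.e.\ $(\partial\tilde Q_i)\llcorner K[s]=(\partial Q_i)\llcorner K[s]$ for $s\le k$. Here $(\partial Q_i)\llcorner U = T_i\llcorner U - (T_i-\partial Q_i)\llcorner U$ is a difference of two finite-mass chains ($T_i\llcorner U$ is finite-mass since $\overline U$ is compact), so all these restrictions are well defined in the sense explained following Definition~\ref{flat}.

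With this preparation the estimate is a near-verbatim copy of the preceding lemma. Fix $s\in(0,k]$, write $[s]$ for $\llcorner K[s]$, and use $\tilde Q_i[s]$ (which is supported in $K[s]$) as a competitor for $\Ff(T_i[s];K[s])$. Using $(\partial\tilde Q_i)[s]=(\partial Q_i)[s]$,
\[
  T_i[s]-\partial\big(\tilde Q_i[s]\big) = (T_i-\partial Q_i)[s] + \big((\partial\tilde Q_i)[s]-\partial(\tilde Q_i[s])\big),
\]
hence
\[
\begin{aligned}
  \Ff\big(T_i[s];K[s]\big)
  &\le \big|(T_i-\partial Q_i)[s]\big| + \big|\tilde Q_i[s]\big| + \big|(\partial\tilde Q_i)[s]-\partial(\tilde Q_i[s])\big| \\
  &\le 2\eps_i + \big|(\partial\tilde Q_i)[s]-\partial(\tilde Q_i[s])\big| .
\end{aligned}
\]
Integrating over $(0,k)$ and applying \cite{fleming}*{Theorem~5.7} to the finite-mass chain $\tilde Q_i$ exactly as in the preceding lemma, the last term contributes at most $|\tilde Q_i|\le\eps_i$, so $\int_0^k \Ff(T_i[s];K[s])\,ds\le (2k+1)\eps_i\to 0$. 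This proves the reduced claim for every $k$, and the diagonal argument from the first paragraph completes the proof.

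I expect the only genuine obstacle to be the bookkeeping in the second step: choosing the cutoff level $c_i$ so that $\partial\tilde Q_i$ is an honest finite-mass chain agreeing with $\partial Q_i$ on all of $K[k]$, and verifying that the restrictions of $\partial Q_i$ to subsets of $U$ are legitimate in the sense of the definitions following Definition~\ref{flat} --- which works precisely because $(\partial Q_i)\llcorner U$ is the difference of the two genuine chains $T_i\llcorner U$ and $(T_i-\partial Q_i)\llcorner U$. Once this is in place, the remaining computation is identical to the one already carried out for the preceding lemma.
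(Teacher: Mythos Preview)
Your argument is correct, but it differs from the paper's in how the reduction to a finite-mass situation is carried out. The paper first restricts the chains themselves: it asserts that $T_i\llcorner B(0,R)\to 0$ in $\Ff$ for a.e.\ $R$, then passes to a subsequence with $\Ff(T_n\llcorner B(0,R_n))<2^{-n}$ for some $R_n\to\infty$, and finally applies the preceding lemma verbatim to the compactly supported chains $T_n'=T_n\llcorner B(0,R_n)$, obtaining $\sum_n\int_0^R\Ff(T_n'[s];K[s])\,ds<\infty$ and hence a.e.\ convergence without a diagonal argument. You instead leave $T_i$ alone and cut the \emph{competitor} $Q_i$ from the definition of $\Ff_U$ down to a sublevel set $K[c_i]$, then redo the computation of the lemma with $\tilde Q_i$ in place of $Q$; a.e.\ convergence is then obtained level-by-level via a diagonal argument.

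Each approach has a virtue. The paper's is more modular (the lemma is used as a black box), but it leans on the unproved assertion that $T_i\llcorner B(0,R)\to 0$ for a.e.\ $R$, which is itself a slicing statement of the same flavor as the theorem. Your route is more self-contained: the only external input is Fleming's coarea inequality applied to the genuinely normal chain $\tilde Q_i$, and the delicate point you flag---that $(\partial Q_i)\llcorner U$ is a bona fide finite-mass chain because it equals $T_i\llcorner U-(T_i-\partial Q_i)\llcorner U$, so that for a.e.\ $c_i$ the boundary $\partial\tilde Q_i$ has finite mass---is exactly what makes the argument go through.
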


\begin{proof}
For almost every $R$,
\[
    T_i\llcorner B(0,R) \to 0.
\]
(Indeed, this holds for any $R$ such that $\mu_{T}\partial B(0,R) = 0$.)
Thus, after passing to a subsequence, there exist $R_n\to \infty$ such that
\[
     \Ff(T_n\llcorner B(0,R_n)) < \frac1{2^n}
\]
and therefore
\[
  \sum_n \Ff(T_n\llcorner B(0,R_n)) < \infty.
\]
Let $T_n'=T_n\llcorner B(0,R_n)$.

Fix an $R<\infty$. 
For all sufficiently large $n$, say $n\ge N$, $K[R]$ is contained in $B(0,R_n)$.
Thus if $r\le R$ and $n\ge N$, then
\[
   T_n\llcorner K[r] = T_n'\llcorner K[r],
\]
so
\begin{align*}
\int_0^R \sum_{n\ge N} \Ff(T_n\llcorner K[r];K[r])\,dr
&=
\sum_{n\ge N} \int_0^R \int  \Ff(T_n' \llcorner K[s]; K[s])\,ds
\\
&\le
(1 + R) \sum_{n\ge N} \Ff(T_n')
\\
&<\infty.
\end{align*}
Hence, for almost all $s\in (0,R)$, we have
\[
   \sum_i \Ff(T_i[s]; K[s])<\infty,
\]
 and therefore~\eqref{Ti-to-zero} holds.
Since $R$ is arbitrary, \eqref{Ti-to-zero} holds for almost 
every $s\in [0,\infty)$.
\end{proof}

\begin{bibdiv}
\begin{biblist}

\bib{allard}{article}{
   author={Allard, William K.},
   title={On the first variation of a varifold},
   journal={Ann. of Math. (2)},
   volume={95},
   date={1972},
   pages={417--491},
   issn={0003-486X},
   review={\MR{0307015}},
   doi={10.2307/1970868},
}

\bib{almgren}{article}{
   author={Almgren, F. J., Jr.},
   title={Existence and regularity almost everywhere of solutions to
   elliptic variational problems among surfaces of varying topological type
   and singularity structure},
   journal={Ann. of Math. (2)},
   volume={87},
   date={1968},
   pages={321--391},
   issn={0003-486X},
   review={\MR{0225243}},
   doi={10.2307/1970587},
}

\bib{bombieri}{article}{
   author={Bombieri, Enrico},
   title={Regularity theory for almost minimal currents},
   journal={Arch. Rational Mech. Anal.},
   volume={78},
   date={1982},
   number={2},
   pages={99--130},
   issn={0003-9527},
   review={\MR{0648941}},
   doi={10.1007/BF00250836},
}

\bib{depauw-hardt}{article}{
   author={De Pauw, Thierry},
   author={Hardt, Robert},
   title={Rectifiable and flat $G$ chains in a metric space},
   journal={Amer. J. Math.},
   volume={134},
   date={2012},
   number={1},
   pages={1--69},
   issn={0002-9327},
   review={\MR{2876138}},
   doi={10.1353/ajm.2012.0004},
}

\bib{depauw-zust}{article}{
   author={De Pauw, Thierry},
   author={Z\"ust, Roger},
   title={Partial regularity of almost minimizing rectifiable $G$ chains in
   Hilbert space},
   journal={Amer. J. Math.},
   volume={141},
   date={2019},
   number={6},
   pages={1591--1705},
   issn={0002-9327},
   review={\MR{4030524}},
   doi={10.1353/ajm.2019.0044},
}

\bib{short-allard}{article}{
   author={De Philippis, Guido},
   author={Gasparetto, Carlo},
   author={Schulze, Felix},
   title={A short proof of Allard's and Brakke's regularity theorems},
   journal={Int. Math. Res. Not. IMRN},
   date={2024},
   number={9},
   pages={7594--7613},
   issn={1073-7928},
   review={\MR{4742836}},
   doi={10.1093/imrn/rnad281},
}

\bib{federer-book}{book}{
   author={Federer, Herbert},
   title={Geometric measure theory},
   series={Die Grundlehren der mathematischen Wissenschaften},
   volume={Band 153},
   publisher={Springer-Verlag New York, Inc., New York},
   date={1969},
   pages={xiv+676},
   review={\MR{0257325}},
}

\bib{federer-curvature}{article}{
   author={Federer, Herbert},
   title={Curvature measures},
   journal={Trans. Amer. Math. Soc.},
   volume={93},
   date={1959},
   pages={418--491},
   issn={0002-9947},
   review={\MR{0110078}},
   doi={10.2307/1993504},
}

\bib{fleming}{article}{
   author={Fleming, Wendell H.},
   title={Flat chains over a finite coefficient group},
   journal={Trans. Amer. Math. Soc.},
   volume={121},
   date={1966},
   pages={160--186},
   issn={0002-9947},
   review={\MR{0185084}},
   doi={10.2307/1994337},
}

\bib{schoen-simon}{article}{
   author={Schoen, Richard},
   author={Simon, Leon},
   title={A new proof of the regularity theorem for rectifiable currents
   which minimize parametric elliptic functionals},
   journal={Indiana Univ. Math. J.},
   volume={31},
   date={1982},
   number={3},
   pages={415--434},
   issn={0022-2518},
   review={\MR{0652826}},
   doi={10.1512/iumj.1982.31.31035},
}	

\bib{white-immiscible}{article}{
   author={White, Brian},
   title={Existence of least-energy configurations of immiscible fluids},
   journal={J. Geom. Anal.},
   volume={6},
   date={1996},
   number={1},
   pages={151--161},
   issn={1050-6926},
   review={\MR{1402391}},
   doi={10.1007/BF02921571},
}

\bib{white-deformation}{article}{
   author={White, Brian},
   title={The deformation theorem for flat chains},
   journal={Acta Math.},
   volume={183},
   date={1999},
   number={2},
   pages={255--271},
   issn={0001-5962},
   review={\MR{1738045}},
   doi={10.1007/BF02392829},
}

\bib{white-rectifiability}{article}{
   author={White, Brian},
   title={Rectifiability of flat chains},
   journal={Ann. of Math. (2)},
   volume={150},
   date={1999},
   number={1},
   pages={165--184},
   issn={0003-486X},
   review={\MR{1715323}},
   doi={10.2307/121100},
}

\bib{white-duke-flat}{article}{
   author={White, Brian},
   title={Currents and flat chains associated to varifolds, with an
   application to mean curvature flow},
   journal={Duke Math. J.},
   volume={148},
   date={2009},
   number={1},
   pages={41--62},
   issn={0012-7094},
   review={\MR{2515099}},
   doi={10.1215/00127094-2009-019},
}

\end{biblist}

\end{bibdiv}

\end{document}